\documentclass[11pt,a4paper]{article}
\usepackage[T1]{fontenc}
\usepackage[utf8]{inputenc}
\usepackage{lmodern,microtype}
\usepackage[margin=28mm]{geometry}
\usepackage{amsmath,amssymb,amsthm,mathtools,mathrsfs,booktabs,array,longtable}
\usepackage{xcolor}
\usepackage[colorlinks=true,linkcolor=blue!45!black,citecolor=blue!45!black,urlcolor=blue!45!black]{hyperref}
\usepackage{enumitem}
\setlist{itemsep=3pt,topsep=5pt}
\allowdisplaybreaks[2]
\newtheorem{theorem}{Theorem}[section]
\newtheorem{proposition}[theorem]{Proposition}
\newtheorem{lemma}[theorem]{Lemma}
\newtheorem{corollary}[theorem]{Corollary}
\theoremstyle{definition}
\newtheorem{definition}[theorem]{Definition}
\newtheorem{remark}[theorem]{Remark}
\newcommand{\R}{\mathbb R}
\newcommand{\C}{\mathbb C}
\newcommand{\Z}{\mathbb Z}

\newcommand{\Pin}{\operatorname{Pin}}
\newcommand{\Dom}{\operatorname{Dom}}
\newcommand{\ran}{\operatorname{ran}}
\newcommand{\ind}{\operatorname{ind}}

\newcommand{\spec}{\operatorname{spec}}

\newcommand{\sx}{\sigma_1}
\newcommand{\sy}{\sigma_2}
\newcommand{\sz}{\sigma_3}
\newcommand{\norm}[1]{\left\lVert#1\right\rVert}
\newcommand{\ip}[2]{\left\langle#1,#2\right\rangle}
\title{Pin Equivariance and Local Dirac Boundary Problems\\on the Flat M\"obius Band}
\author{Anik Chakraborty\\\small Department of Mathematics, University of Delhi, India}
\date{September 16, 2026}
\begin{document}
\maketitle
\begin{abstract}
We study complex rank-two pinor fields on the flat M\"obius band and the relation between their deck equivariance and Dirac boundary realizations. With explicit Clifford conventions, the two lifts for each Pin type are represented by $R=c\sigma_1$, where $c=\pm1$ for $\Pin^+$ and $c=\pm i$ for $\Pin^-$. The relevant longitudinal monodromy is $cJ$, with $J\phi(w)=\sigma_1\phi(-w)$, rather than the fiber matrix alone. Its eigenspaces give shifts $0,1/2$ in the positive case and $1/4,3/4$ in the negative case. We identify the resulting geometric Sobolev spaces and construct self-adjoint elliptic realizations of $D=-i(\sigma_1\partial_x+\sigma_2\partial_w)$ with a local boundary condition defined by the longitudinal tangent field. An exact graph estimate gives compact resolvent and Fredholm mappings on nonnegative Sobolev levels. The spectrum consists of a single unpaired transverse branch and paired branches whose longitudinal shifts alternate with transverse parity. Consequently, the kernel and spectral symmetry depend on the lift and boundary condition; the two negative Pin lifts have eta values $1/2$ and $-1/2$ in the stated convention. We also give a secular equation and an exact kernel criterion for the full translation-invariant family of local self-adjoint elliptic boundary lines.
\end{abstract}
\noindent\textbf{Keywords:} M\"obius band; Pin structure; equivariant section; Dirac operator; local elliptic boundary condition; spectral asymmetry.\\
\textbf{2020 Mathematics Subject Classification:} 58J50, 53C27, 35P05, 46E35.

\section{Introduction}
An orientation-reversing identification does not by itself impose antiperiodicity on a pinor. The lifted identification acts both on the fiber and on the transverse variable. The distinction matters when one passes from a geometric quotient to Fourier coefficients, and it matters again when a differential expression is assigned a boundary domain.

We consider
\begin{equation}\label{eq:model}
 \widetilde M=\R\times[-1,1],\qquad
 \tau(x,w)=(x+1,-w),\qquad M=\widetilde M/\langle\tau\rangle,
\end{equation}
with the metric induced by $dx^2+dw^2$. The core has length one and the single boundary component has length two. Our question is precise: for each Pin lift, which Fourier sectors occur, and how do they combine with a specified local elliptic boundary condition to determine a closed Dirac realization?

The answer involves a transverse involution. In the representation fixed below the lift matrix is $R=c\sx$, and
\[
 J\phi(w)=\sx\phi(-w),\qquad J^2=I.
\]
On the $j$-eigenspace of $J$, $j=\pm1$, translation by one acts by $cj$. Thus the shift $\theta_{c,j}\in[0,1)$ is determined by
\begin{equation}\label{eq:phaseintro}
 e^{2\pi i\theta_{c,j}}=cj.
\end{equation}
This gives integer and half-integer sectors for $\Pin^+$, and quarter- and three-quarter-shifted sectors for $\Pin^-$. The two characters within a Pin type interchange the assignment of shifts to the transverse eigenspaces.

We use the local condition $\sx\Psi=b\Psi$ on the physical boundary, where $b=\pm1$. It is globally defined because the longitudinal unit vector field descends to $M$. This choice permits direct proofs of self-adjointness and compact resolvent, and an explicit spectrum. For transverse level $n\geq0$, the shift satisfies
\[
 e^{2\pi i\theta_n}=cb(-1)^n.
\]
For $n\geq1$ there are paired eigenvalues
\[
 \pm\sqrt{\{2\pi(k+\theta_n)\}^2+(n\pi/2)^2},\qquad k\in\Z.
\]
At $n=0$ the eigenvalues are $2\pi b(k+\theta_0)$, with no second transverse state. This branch accounts for both the possible kernel and the spectral asymmetry. These formulas are derived after the domain is established; they are not formulas for APS boundary conditions.

\paragraph{Literature and scope.}
The obstruction and torsor description of Pin structures is classical; see Kirby and Taylor \cite{KT}. Equivariant descent from the universal cover is likewise part of established Pin geometry. Cahen, Gutt and Trautman \cite[Theorem 3]{CGT} give a general covering-space criterion and explain the modified Dirac construction associated with the twisted adjoint action. Our representation conventions make that covariance explicit in dimension two.

Krau\ss har \cite{Krausshar} studies Clifford-analytic constructions on non-orientable quotients, including M\"obius strips. His treatment of the M\"obius case distinguishes harmonic sections from the monogenic constructions available for other quotients. It is not a ready-made spectral theorem for the rank-two boundary realization considered here.

In graphene models, de Souza and Furtado \cite{SouzaFurtado} already obtain periodic and antiperiodic contributions coupled to transverse quantum numbers. Thus neither their coexistence nor a dependence on transverse parity is a new general phenomenon. Their valley and edge conditions differ from the Pin bundle and boundary line fixed here. Araki, Fukaya, Onogi and Yamaguchi \cite{Araki} also treat negative Pin structures and M\"obius geometries, using lattice and domain-wall operators in their study of the Arf--Brown--Kervaire invariant.

The general analytic framework for elliptic Dirac boundary problems is developed by B\"ar and Ballmann \cite{BB,BBGuide}. Local self-adjoint boundary conditions and regularity have also been analyzed systematically by Gro\ss e, Uribe and van den Bosch \cite{GUV}. The existence of such realizations is therefore not a new general theorem. Here the principal results are an explicit joint description of the lift, reflection sector, geometric Sobolev space, and local boundary spectrum, together with a kernel criterion for the translation-invariant boundary family. The results are model-specific; no priority claim for the general mechanisms is made.

Earlier geometric motivation \cite{Chakraborty} and an abstract half-integer Hilbert-scale construction \cite{CK} provide two related settings. Neither is needed for the proofs. Section~\ref{sec:scale} identifies the precise scalar restriction that leads from the present geometry to a prescribed shifted scale.

\section{M\"obius geometry and Pin structures}
\subsection{Topology and characteristic classes}
The action in \eqref{eq:model} is free: $\tau^n(x,w)=(x+n,(-1)^nw)$ has a fixed point only for $n=0$. It is properly discontinuous because a compact subset of the strip meets only finitely many of its integer translates. Hence the quotient is a smooth compact manifold with boundary. Its usual rectangle presentation is the M\"obius band. The strip is contractible, so it is the universal cover and $\pi_1(M)\cong\Z$.

The homotopy $(x,w)\mapsto(x,(1-t)w)$ commutes with $\tau$ and descends to a deformation retraction onto the core $\R/\Z$. Consequently
\[
 H^1(M;\Z_2)=\Z_2,\qquad H^2(M;\Z_2)=0.
\]
The derivative of the generator is
\begin{equation}\label{eq:A}
 A=d\tau=\begin{pmatrix}1&0\\0&-1\end{pmatrix}.
\end{equation}
The orientation character sends $n$ to $(-1)^n$. If $a$ denotes the nonzero class in $H^1(M;\Z_2)$, then
\begin{equation}\label{eq:classes}
 w_1(TM)=a,\qquad w_2(TM)=0,\qquad a^2=0.
\end{equation}
One can also read this from $TM=\underline\R\,\partial_x\oplus L$, where $L$ has monodromy $-1$ around the core. The two boundary lines of the cover are exchanged by $\tau$ and give one boundary circle downstairs. The orientation double cover is $(\R/2\Z)\times[-1,1]$, and is distinct from the universal cover.

\subsection{Principal lifts}
Let $\mathrm{Cl}_{\delta}(2)$ be the real Clifford algebra with
\[
 uv+vu=2\delta\ip{u}{v},\qquad \delta\in\{+1,-1\}.
\]
Its Pin group maps to $O(2)$ by the twisted adjoint action
\[
 p_\delta(g)v=\alpha(g)vg^{-1},
\]
where $\alpha$ is the grading automorphism. In particular, a unit vector $u$ covers reflection in the line perpendicular to $u$, and $u^2=\delta$.

The standard obstruction criteria are $w_2=0$ for $\Pin^+$ and $w_2+w_1^2=0$ for $\Pin^-$. When the obstruction vanishes, the isomorphism classes of lifts of a fixed orthogonal frame bundle form an $H^1(M;\Z_2)$-torsor \cite[Section 1]{KT}. Thus both types exist here and each has two classes. The following construction identifies them directly.

\begin{theorem}[Lifted deck actions]\label{thm:lifts}
Fix $\delta\in\{+1,-1\}$. In the coordinate frame on the cover, the two Pin structures are
\[
 P_{\delta,\varepsilon}=
 (\widetilde M\times\Pin^\delta(2))/\Z,\qquad \varepsilon\in\{+1,-1\},
\]
where the generator acts by
\begin{equation}\label{eq:principal}
 (p,g)\longmapsto(\tau p,\varepsilon e_w g).
\end{equation}
Their lifted generators square to $(p,g)\mapsto(\tau^2p,\delta g)$. Changing $\varepsilon$ is the nontrivial torsor action.
\end{theorem}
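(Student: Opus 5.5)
The plan is to verify three things: that the formula \eqref{eq:principal} defines a principal $\Pin^\delta(2)$-bundle covering the orthonormal frame bundle of $M$; that the two choices of $\varepsilon$ give non-isomorphic structures; and that, because classes form an $H^1(M;\Z_2)$-torsor with $H^1(M;\Z_2)=\Z_2$ \cite[Section 1]{KT}, these two are all of them.

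\emph{Step 1: the action covers $d\tau$.} The frame bundle of $\widetilde M$ is trivialized by the coordinate frame $(e_x,e_w)=(\partial_x,\partial_w)$, so $F\widetilde M=\widetilde M\times O(2)$. In this trivialization $\tau$ acts by $(p,h)\mapsto(\tau p,Ah)$, with $A$ as in \eqref{eq:A}. Take $\Pin^\delta(2)$ acting on the right and the map $p_\delta$ on the fiber. The element $\varepsilon e_w$ is a unit vector up to sign, so $p_\delta(\varepsilon e_w)$ is the reflection in the line perpendicular to $e_w$, namely $\operatorname{diag}(1,-1)=A$. The sign $\varepsilon$ drops out because $p_\delta$ has kernel $\{\pm1\}$. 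Consequently the map $(p,g)\mapsto(p,p_\delta(g))$ intertwines the lifted generator with $d\tau$. Left multiplication by $\varepsilon e_w$ commutes with the right $\Pin^\delta(2)$-action, so the quotient is a principal bundle over $M$ and descends to a double covering of $FM$. The $\Z$-action is free and properly discontinuous because it covers $\tau$, which is, as shown at the start of this section.

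\emph{Step 2: the square.} Applying the generator twice gives $(\tau^2p,(\varepsilon e_w)^2g)=(\tau^2p,e_w^2g)=(\tau^2p,\delta g)$, using $u^2=\delta$ for unit $u$. This is the asserted square. One should check that the sign convention for $p_\delta$ (twisted adjoint action) agrees with the claim that $e_w$ covers $A$ and not $-A$. For $\delta=\pm1$ we have $\alpha(e_w)e_xe_w^{-1}=-e_we_x e_w^{-1}=e_x$ and $-e_we_we_w^{-1}=-e_w$, so the identification holds regardless of $\delta$.

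\emph{Step 3: the two structures are distinct and differ by the torsor action.} I expect this to be the main obstacle. The torsor action of $a\in H^1(M;\Z_2)=\operatorname{Hom}(\pi_1M,\Z_2)$ twists the deck action by the character $a(\tau)=-1$. This replaces $e_w$ by $-e_w$, so $P_{\delta,-\varepsilon}=a\cdot P_{\delta,\varepsilon}$. For non-isomorphism I would argue via holonomy over the core. Suppose $\Phi$ is an isomorphism of double covers of $FM$. Lift it to the cover as $(p,g)\mapsto(p,s(p)g)$ with $s\colon\widetilde M\to\{\pm1\}$. Since $\widetilde M$ is connected, $s$ is constant. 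Equivariance then forces $\varepsilon e_w s=s\varepsilon' e_w$, hence $\varepsilon=\varepsilon'$. Alternatively, the two lifts differ by a nontrivial element of the torsor, because the character $n\mapsto(-1)^n$ is nontrivial. Either way there are exactly two classes for each $\delta$. This matches the count from the obstruction computation \eqref{eq:classes}, where $w_2=0$ and $w_2+w_1^2=a^2=0$.
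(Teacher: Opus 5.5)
Your proposal is correct. Steps 1 and 2 match the paper's verification: $p_\delta(\pm e_w)=A$, commutation with the right action, and $(\varepsilon e_w)^2=\delta$. Your explicit check of the twisted adjoint sign is a welcome addition.

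The routes differ in how you show that $P_{\delta,\pm1}$ exhaust the Pin structures. You prove the two are non-isomorphic: an isomorphism lifts to left multiplication by a constant central sign $s$, which forces $\varepsilon=\varepsilon'$. You then invoke the $H^1(M;\Z_2)$-torsor theorem of \cite{KT}, with $H^1(M;\Z_2)=\Z_2$, to conclude that there are no others.

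The paper instead classifies directly. It pulls an arbitrary Pin structure back to the contractible strip and lifts the coordinate frame to a global Pin frame. The lift of $\tau$ must then be left multiplication by a continuous, hence constant, element of $p_\delta^{-1}(A)=\{\pm e_w\}$. A change of lifted frame is a central constant sign, so it does not alter this choice.

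The paper's argument is self-contained and recovers the count of two, so it is consistent with the torsor theorem rather than dependent on it. Yours is shorter once that theorem is granted, and uses it only for surjectivity. The universal-cover computation you already use for non-isomorphism gives exhaustiveness directly when applied to an arbitrary Pin structure instead of to an isomorphism. So the citation can be dropped at no cost.

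A small wording point: what you announce as a holonomy argument over the core is really this lifting argument.
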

\begin{proof}
In the coordinate trivialization,
\[
 O(TM)=(\widetilde M\times O(2))/((p,h)\sim(\tau p,Ah)).
\]
Since $p_\delta(\varepsilon e_w)=A$, the map $[p,g]\mapsto[p,p_\delta(g)]$ is a well-defined equivariant double cover of this frame bundle. The action \eqref{eq:principal} commutes with the right Pin action. Its square is as asserted because $(\varepsilon e_w)^2=\delta$; this is allowed because the deck group is $\Z$, with no order-two relation on its generator.

Conversely, pull a Pin structure back to the contractible strip and lift the coordinate frame to a global Pin frame. The lift of $\tau$ must multiply this frame by an element over $A$. There are exactly two such elements, $e_w$ and $-e_w$, and continuity makes the choice constant. A change of lifted coordinate frame over the same orthogonal frame is a constant sign, which does not change this choice. This gives exactly the two structures. Twisting by the nontrivial real flat line, with character $n\mapsto(-1)^n$, changes $\varepsilon$ to $-\varepsilon$.
\end{proof}

\begin{remark}
Inequivalent Pin lifts should not be confused with inequivalent underlying complex vector bundles. Since $M$ retracts onto a circle, the latter can be trivial even when the Pin data or flat connection differs.
\end{remark}

\section{Covariant pinor representations and equivariant sections}
Use the Hermitian Pauli matrices
\[
 \sx=\begin{pmatrix}0&1\\1&0\end{pmatrix},\quad
 \sy=\begin{pmatrix}0&-i\\i&0\end{pmatrix},\quad
 \sz=\begin{pmatrix}1&0\\0&-1\end{pmatrix}.
\]
Set $\gamma(v)=v_x\sx+v_w\sy$ and $\Gamma=\sz$. We fix the Clifford representations
\begin{equation}\label{eq:rho}
 \rho_+(v)=-i\gamma(v)\Gamma,\qquad
 \rho_-(v)=\gamma(v)\Gamma.
\end{equation}
They satisfy the defining relations of $\mathrm{Cl}_+(2)$ and $\mathrm{Cl}_-(2)$, respectively. In particular,
\[
\begin{array}{c|cc}
 &e_x&e_w\\ \hline
\rho_+&-\sy&\sx\\
\rho_-&-i\sy&i\sx
\end{array}
\]
and $\rho_\delta(-1)=-I$. Both restrict to unitary Pin representations.

\begin{lemma}[Clifford covariance]\label{lem:covariance}
For every $g\in\Pin^\delta(2)$ and $v\in\R^2$,
\begin{equation}\label{eq:covariance}
 \rho_\delta(g)\gamma(v)\rho_\delta(g)^{-1}
 =\gamma(p_\delta(g)v).
\end{equation}
Thus $-i\gamma$ gives a skew-Hermitian Clifford multiplication on the associated bundle.
\end{lemma}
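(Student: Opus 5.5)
Both sides of \eqref{eq:covariance} are multiplicative in $g$: if the identity holds for $g$ and $h$, then conjugating by $\rho_\delta(gh)=\rho_\delta(g)\rho_\delta(h)$ gives $\gamma(p_\delta(g)p_\delta(h)v)=\gamma(p_\delta(gh)v)$, because $p_\delta$ is a homomorphism. The identity holds trivially for $g=\pm1$. Since $\Pin^\delta(2)$ is generated by unit vectors $u\in\R^2$ together with $\pm1$, it therefore suffices to verify \eqref{eq:covariance} for $g=u$ a unit vector.

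For such $u$, the definition of the twisted adjoint action gives $p_\delta(u)v=\alpha(u)vu^{-1}=-uvu^{-1}$. With $u^{-1}=\delta u$ this becomes
\[
 p_\delta(u)v=v-2\langle u,v\rangle u,
\]
which is the reflection stated in the paper. On the other side, $\rho_\delta(u)=\kappa_\delta\,\gamma(u)\Gamma$ with $\kappa_+=-i$ and $\kappa_-=1$. The scalar factor cancels under conjugation, so
\[
 \rho_\delta(u)\gamma(v)\rho_\delta(u)^{-1}=\gamma(u)\Gamma\,\gamma(v)\,\Gamma^{-1}\gamma(u)^{-1}.
\]
The right-hand side simplifies in two steps:
\begin{itemize}
 \item $\Gamma=\sigma_3$ anticommutes with $\sigma_1$ and $\sigma_2$, so $\Gamma\gamma(v)\Gamma^{-1}=-\gamma(v)$.
 \item Since $\gamma(u)^2=|u|^2I=I$, we have $\gamma(u)^{-1}=\gamma(u)$.
\end{itemize}
The expression therefore equals $-\gamma(u)\gamma(v)\gamma(u)$.

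Now use $\gamma(u)\gamma(v)=-\gamma(v)\gamma(u)+2\langle u,v\rangle I$. Substituting,
\[
 -\gamma(u)\gamma(v)\gamma(u)=\gamma(v)-2\langle u,v\rangle\gamma(u)=\gamma(v-2\langle u,v\rangle u),
\]
which is $\gamma(p_\delta(u)v)$, as required. The sign $-1$ produced by $\Gamma$ is exactly what makes the result a reflection rather than the rotation one would get from naive conjugation by $\gamma(u)$. This is the point where the twisted adjoint action and the choice \eqref{eq:rho} match. I expect the main risk to be bookkeeping the grading sign $\alpha(u)=-u$ against this $\Gamma$-sign, so I would double-check it against the table entry for $e_w$, where $\sigma_1\gamma(v)\sigma_1$ should realize $A$.

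For the last claim, write $c(v)=-i\gamma(v)$. Because $\gamma(v)$ is Hermitian, $c(v)$ is skew-Hermitian, and $c(v)^2=-|v|^2$. The representations $\rho_\delta$ are unitary on $\Pin^\delta(2)$ (they restrict to unitary Pin representations, as stated above), so \eqref{eq:covariance} shows that $c$ is compatible with the transition functions of the associated bundle. Concretely, $[p,g,\xi]$ with tangent vector $[p,g,v]$ maps to $[p,g,c(v)\xi]$, and this is well defined because the equivariance matches the frame action $p_\delta(g)$. Hence $c$ defines a skew-Hermitian Clifford multiplication on the associated bundle.
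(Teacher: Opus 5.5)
Your proof is correct and takes essentially the paper's route: you reduce to unit-vector generators and compute conjugation by $\gamma(u)\Gamma$. Where the paper splits $v$ into components along and perpendicular to $u$, you use $-\gamma(u)\gamma(v)\gamma(u)=\gamma(v-2\langle u,v\rangle u)$, and you also spell out the bundle-level well-definedness and skew-Hermitian property that the paper leaves implicit.

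One correction to your heuristic aside: in dimension two, conjugation by $\gamma(u)$ alone gives $v\mapsto 2\langle u,v\rangle u-v$. That is still a reflection, across $\R u$ instead of $u^\perp$ (for example $-A$ instead of $A$ when $u=e_w$), not a rotation, though this does not affect your argument.
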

\begin{proof}
It suffices to check a unit vector $u$, since such vectors generate the Pin group. The matrix $\gamma(u)\Gamma$ anticommutes with $\gamma(u)$ and commutes with $\gamma(v)$ for $v\perp u$. Conjugation therefore changes the normal component of $v$ by a sign and fixes its perpendicular component. Multiplication of the conjugating matrix by $-i$ does not affect its conjugation action. This proves \eqref{eq:covariance}. Also $\gamma(v)^2=|v|^2I$, so $(-i\gamma(v))^2=-|v|^2I$.
\end{proof}

The distinction between the representation \eqref{eq:rho} and the Clifford multiplication $-i\gamma$ is deliberate. Direct use of the same Clifford generators for both roles, without checking the twisted adjoint action, can produce an operator taking values in the orientation-line twist of the intended bundle. Formula \eqref{eq:covariance} specifies the endomorphism-valued Dirac convention used here; compare \cite{CGT}.

Let $S_{\delta,\varepsilon}=P_{\delta,\varepsilon}\times_{\rho_\delta}\C^2$, and put
\begin{equation}\label{eq:c}
 c=\begin{cases}\varepsilon,&\delta=+1,\\i\varepsilon,&\delta=-1,\end{cases}
 \qquad R_c=c\sx.
\end{equation}
We also write $S_c$ for this bundle with its flat connection.

\begin{theorem}[Equivariant-section correspondence]\label{thm:sections}
Smooth sections of $S_c$ correspond bijectively to smooth functions on the strip satisfying
\begin{equation}\label{eq:equiv}
 \Psi(x+1,-w)=R_c\Psi(x,w).
\end{equation}
This correspondence extends to $L^2$ and to nonnegative geometric Sobolev orders. The $L^2$ norm is the integral over $F=[0,1]\times[-1,1]$. Furthermore,
\[
 R_c^2=c^2I=\delta I,\quad \spec(R_c)=\{c,-c\},\quad
 E_{\pm c}(R_c)=E_{\pm1}(\sx).
\]
\end{theorem}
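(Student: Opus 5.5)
The plan is to reduce everything to the standard description of sections of an associated bundle as equivariant maps, and then specialize using Theorem~\ref{thm:lifts}. A section of $S_{\delta,\varepsilon}=P_{\delta,\varepsilon}\times_{\rho_\delta}\C^2$ is the same as a map $\widetilde\Psi:\widetilde M\times\Pin^\delta(2)\to\C^2$ that is invariant under the $\Z$-action \eqref{eq:principal} and satisfies $\widetilde\Psi(p,gh)=\rho_\delta(h)^{-1}\widetilde\Psi(p,g)$. Setting $\Psi(p)=\widetilde\Psi(p,1)$ determines $\widetilde\Psi$ completely. Conversely, any smooth $\Psi$ on the strip defines a $\widetilde\Psi$ with the right $\Pin$-equivariance. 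The invariance condition $\widetilde\Psi(\tau p,\varepsilon e_w g)=\widetilde\Psi(p,g)$ at $g=1$ reads
\[
\rho_\delta(\varepsilon e_w)^{-1}\Psi(\tau p)=\Psi(p),\qquad\text{that is,}\qquad \Psi(\tau p)=\rho_\delta(\varepsilon e_w)\Psi(p).
\]
From the table, $\rho_+(e_w)=\sx$ and $\rho_-(e_w)=i\sx$, so $\rho_\delta(\varepsilon e_w)=c\sx=R_c$ with $c$ as in \eqref{eq:c}. Since $\tau(x,w)=(x+1,-w)$, this is exactly \eqref{eq:equiv}. Smoothness transfers in both directions because the projections are local diffeomorphisms. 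The identity needs checking only at $g=1$, since the general $g$ case follows by right multiplication.

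One sign convention must be handled carefully. Depending on whether the associated bundle is written as $(p,g,v)\sim(p,gh,\rho(h)^{-1}v)$, one may get $R_c$ or $R_c^{-1}$. Because $R_c^{-1}=\delta R_c$, the two versions differ exactly when $\delta=-1$, where $R_c^{-1}=R_{-c}$. This only relabels $\varepsilon$ and does not change the set of structures. I would fix the convention so that the stated $R_c$ appears and note this explicitly. I expect this bookkeeping, rather than any analysis, to be the main point requiring care.

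For $L^2$, the fiber metric on $S_c$ comes from the standard Hermitian product on $\C^2$. This is well defined because $\rho_\delta$ is unitary on $\Pin^\delta(2)$, and $R_c$ is unitary since $|c|=1$. Hence $|\Psi|^2$ is $\tau$-invariant and descends to $M$. Since $F=[0,1]\times[-1,1]$ is a fundamental domain up to a null set, $\int_M|\Psi|^2=\int_F|\Psi|^2$. Density of smooth sections then extends the bijection to an isometric isomorphism onto the $L^2_{\mathrm{loc}}$ functions satisfying \eqref{eq:equiv}.

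For Sobolev orders $s\ge0$, the flat connection on $S_c$ lifts to the trivial connection $d$ on the strip, and the Levi-Civita connection is trivial in the coordinate frame. So $\nabla^k\Psi$ corresponds to the ordinary partial derivatives, which again satisfy an equivariance rule: $\partial_x$ commutes with $\tau$, and $\partial_w$ picks up a sign. The integer-order norms are therefore the $H^k(F)$ norms of equivariant lifts. Noninteger orders follow by interpolation, since the correspondence is an isometry at each integer level.

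Finally, $R_c^2=c^2\sx^2=c^2I$, and $c^2=\varepsilon^2=1$ for $\delta=1$ while $c^2=(i\varepsilon)^2=-1$ for $\delta=-1$, so $R_c^2=\delta I$. The eigenvalues of $\sx$ are $\pm1$, so $\spec(R_c)=\{c,-c\}$ and $E_{\pm c}(R_c)=E_{\pm1}(\sx)$.
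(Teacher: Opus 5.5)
Your proposal is correct and follows essentially the paper's route. You identify $S_c$ with $(\widetilde M\times\C^2)/((p,v)\sim(\tau p,R_cv))$, lift sections, and use unitarity of $R_c$ together with the flat coordinate connection for the norms; you also explicitly derive $R_c=\rho_\delta(\varepsilon e_w)$ from Theorem~\ref{thm:lifts}, which the paper only asserts. Two minor remarks: the standard convention you already used yields $R_c$ unambiguously, so the hedge about relabeling $\varepsilon$ is unnecessary; and for $L^2$ and weak derivatives it is cleaner to apply your lifting argument directly to measurable sections, as the paper does, because the ``onto'' part of your density argument also needs density of smooth equivariant functions among equivariant $L^2_{\mathrm{loc}}$ functions.
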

\begin{proof}
The associated quotient is $(\widetilde M\times\C^2)/((p,v)\sim(\tau p,R_cv))$. A section gives a unique lift in this trivialization, and equality of its two descriptions at $p$ and $\tau p$ is exactly \eqref{eq:equiv}. Conversely, \eqref{eq:equiv} makes $[p,\Psi(p)]$ independent of the chosen lift of a base point. Smoothness is a local property of these descriptions. Unitarity of $R_c$ makes the norm independent of the fundamental domain. The same argument with measurable functions or weak derivatives gives the stated extensions. The matrix statements follow from $\sx^2=I$ and its two one-dimensional eigenspaces.
\end{proof}

In particular $\Psi(x+2,w)=\delta\Psi(x,w)$. On the orientation double cover the pulled-back fields are periodic over length two for $\Pin^+$ and antiperiodic over length two for $\Pin^-$. This is not yet the full one-step Fourier condition.

\section{Fourier decomposition of equivariant sections}\label{sec:fourier}
Let $\mathcal K=L^2([-1,1];\C^2)$ and define
\[
 (\mathcal P\phi)(w)=\phi(-w),\quad J=\sx\mathcal P,\quad
 \mathcal K_j=\ker(J-jI),\quad \Pi_j=\tfrac12(I+jJ).
\]
Both $\mathcal P$ and $J$ are unitary involutions. The one-step condition is
\begin{equation}\label{eq:one-step}
 \Psi(x+1,\cdot)=cJ\Psi(x,\cdot).
\end{equation}
Define $\theta_{c,j}$ by \eqref{eq:phaseintro} and $q_{c,j,k}=2\pi(k+\theta_{c,j})$.

\begin{theorem}[Equivariant Fourier decomposition]\label{thm:fourier}
Every $L^2$ section has the unique expansion
\begin{equation}\label{eq:Fourier}
 \Psi(x,w)=\sum_{j=\pm1}\sum_{k\in\Z}
 e^{iq_{c,j,k}x}\phi_{j,k}(w),\qquad \phi_{j,k}\in\mathcal K_j,
\end{equation}
with convergence in $L^2(F)$ and
\begin{equation}\label{eq:parseval}
 \norm{\Psi}_{L^2(M)}^2=\sum_{j,k}\norm{\phi_{j,k}}_{\mathcal K}^2.
\end{equation}
Conversely, every square-summable coefficient family of this form defines a section. The phase assignments are
\begin{center}
\begin{tabular}{ccccc}
\toprule
Pin type&$\varepsilon$&$R_c$&$\theta_{c,+1}$&$\theta_{c,-1}$\\
\midrule
$+$&$+1$&$\sx$&$0$&$1/2$\\
$+$&$-1$&$-\sx$&$1/2$&$0$\\
$-$&$+1$&$i\sx$&$1/4$&$3/4$\\
$-$&$-1$&$-i\sx$&$3/4$&$1/4$\\
\bottomrule
\end{tabular}
\end{center}
\end{theorem}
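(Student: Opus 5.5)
The argument passes from the strip to Fourier analysis in the longitudinal variable, treating $\mathcal K$ as a Hilbert-space-valued coefficient space. By Theorem~\ref{thm:sections}, an $L^2$ section of $S_c$ is the same as a function $\Psi\in L^2_{\mathrm{loc}}(\R\times[-1,1];\C^2)$ satisfying \eqref{eq:equiv}, with norm computed on $F$. Regarding $x\mapsto\Psi(x,\cdot)$ as an element of $L^2_{\mathrm{loc}}(\R;\mathcal K)$, condition \eqref{eq:equiv} is exactly the one-step condition \eqref{eq:one-step}, since $R_c\Psi(x,-w)$ evaluated appropriately gives $(cJ\Psi(x,\cdot))(w)$.

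The first step splits by the transverse involution. Because $J$ is a unitary involution on $\mathcal K$, the projections $\Pi_{\pm1}$ are orthogonal and $\mathcal K=\mathcal K_{+1}\oplus\mathcal K_{-1}$. Because $J$ acts only in $w$, it commutes with translation in $x$. Hence $\Psi_j(x):=\Pi_j\Psi(x,\cdot)$ satisfies $\Psi_j(x+1)=cj\,\Psi_j(x)$, and pointwise in $x$ we have $\norm{\Psi(x)}^2=\norm{\Psi_{+1}(x)}^2+\norm{\Psi_{-1}(x)}^2$.

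The second step removes the phase. Since $|cj|=1$ and $c^2=\delta$, the number $cj$ lies in $\{\pm1,\pm i\}$, so $\theta_{c,j}\in\{0,1/4,1/2,3/4\}$ is uniquely defined by \eqref{eq:phaseintro}. Setting $\chi_j(x)=e^{-2\pi i\theta_{c,j}x}\Psi_j(x)$ gives a $1$-periodic element of $L^2([0,1];\mathcal K_j)$. Vector-valued Fourier series on the circle then give the expansion $\chi_j=\sum_k e^{2\pi ikx}\phi_{j,k}$, with $\phi_{j,k}=\int_0^1 e^{-2\pi ikx}\chi_j(x)\,dx\in\mathcal K_j$ and Parseval $\int_0^1\norm{\chi_j}^2=\sum_k\norm{\phi_{j,k}}^2$. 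This holds because $\{e^{2\pi ikx}\}$ is an orthonormal basis of $L^2[0,1]$ and $L^2([0,1];\mathcal K_j)\cong L^2[0,1]\otimes\mathcal K_j$. Multiplying back by the unimodular phase gives \eqref{eq:Fourier} with $L^2(F)$ convergence. Summing over $j$ and using Fubini gives \eqref{eq:parseval}. Uniqueness follows from orthogonality: the terms with different $j$ are orthogonal in $\mathcal K$, and within fixed $j$ the coefficients are recovered by the integral formula.

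For the converse, take a square-summable family $\phi_{j,k}\in\mathcal K_j$. It defines $\chi_j\in L^2([0,1];\mathcal K_j)$, which we extend periodically. Then $\Psi:=\sum_j e^{2\pi i\theta_{c,j}x}\chi_j$ satisfies \eqref{eq:one-step} because $J$ acts by $j$ on each summand, so by Theorem~\ref{thm:sections} it is a section.

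The table is read off from \eqref{eq:c}:
\begin{itemize}
\item for $c=\pm1$, $cj=\pm j$ gives shifts $0,1/2$, swapped when $\varepsilon$ flips;
\item for $c=i\varepsilon$, $cj=i\varepsilon j$, and $e^{2\pi i/4}=i$, $e^{2\pi i\cdot3/4}=-i$ give $1/4$ when $\varepsilon j=1$ and $3/4$ otherwise.
\end{itemize}

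The only point needing care is the identification of \eqref{eq:equiv} with \eqref{eq:one-step} at the $L^2$ level. Since $\Psi$ is defined only almost everywhere, the identity must hold for a.e.\ $x$ as elements of $\mathcal K$, which follows from Fubini. The other technical input is the $\mathcal K$-valued Fourier theorem, which follows from the scalar case via the tensor-product identification above.
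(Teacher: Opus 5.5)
Your proposal is correct and follows essentially the same route as the paper. Both arguments project the one-step condition $\Psi(x+1,\cdot)=cJ\Psi(x,\cdot)$ by $\Pi_j$, remove the scalar quasiperiod $cj$ with $e^{-2\pi i\theta_{c,j}x}$, apply the $\mathcal K_j$-valued Fourier theorem, use $\mathcal K_+\perp\mathcal K_-$ for orthogonality and Parseval, reverse the construction for the converse, and read the table off from the arguments of $cj$; your almost-everywhere identification of the equivariance condition with this one-step condition via Fubini makes explicit a detail the paper leaves implicit.
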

\begin{proof}
Apply $\Pi_j$ to \eqref{eq:one-step}. The result has scalar quasiperiod $cj$. Multiplication by $e^{-2\pi i\theta_{c,j}x}$ converts it into a periodic $\mathcal K_j$-valued function of period one. The ordinary Hilbert-valued Fourier theorem gives \eqref{eq:Fourier} and Parseval on each summand. The summands are orthogonal because $\mathcal K_+\perp\mathcal K_-$, even though the two scalar shifted exponential systems are not mutually orthogonal on $[0,1]$. Reversing the construction proves the converse. The table is obtained by taking the arguments of $cj$.
\end{proof}

If a scalar transverse function has reflection parity $p=\pm1$ and a constant fiber vector satisfies $\sx v=a v$, then their product belongs to $\mathcal K_{ap}$. Its phase is therefore $cap$. The eigenvalue of the fiber matrix alone determines the phase only after the transverse parity has been fixed.

The full momentum set is $\frac12\Z$ in the positive case and $\frac14+\frac12\Z$ in the negative case. The antiperiodic family $e^{2\pi i(k+1/2)x}$ occurs precisely on $\mathcal K_{-\varepsilon}$ for a positive Pin lift. It is a proper reducing sector, not the entire positive Pinor space.

\begin{corollary}[Additional flat holonomy]\label{cor:holonomy}
Tensoring with a flat unitary line of holonomy $e^{2\pi i\beta}$ around the core replaces $c$ by $e^{2\pi i\beta}c$ and adds $\beta$ to both phases modulo one. Arbitrary phases obtained this way include additional connection data and are not additional Pin structures.
\end{corollary}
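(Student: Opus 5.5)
The plan is to check the effect of the twist on sections directly, and then reread the phases of Theorem~\ref{thm:fourier}. Let $\mathcal L_\beta$ be the flat unitary line on $M$ with holonomy $e^{2\pi i\beta}$ around the core. Since $\pi_1(M)\cong\Z$ is generated by $\tau$, I will realize $\mathcal L_\beta$ as $(\widetilde M\times\C)/((p,z)\sim(\tau p,e^{2\pi i\beta}z))$; up to flat isomorphism this presentation is unique, because a flat line bundle is determined by its holonomy character. The tensor product $S_c\otimes\mathcal L_\beta$ is then $(\widetilde M\times\C^2)/((p,v)\sim(\tau p,e^{2\pi i\beta}c\,\sx v))$.

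Repeating the argument of Theorem~\ref{thm:sections} word for word with $R_c$ replaced by $e^{2\pi i\beta}R_c$, sections of the twisted bundle correspond to strip functions with $\Psi(x+1,-w)=e^{2\pi i\beta}c\,\sx\Psi(x,w)$. The proof of Theorem~\ref{thm:sections} only used the existence of the quotient and the unitarity of $R_c$, and $e^{2\pi i\beta}R_c$ is still unitary. Equivalently, the one-step condition \eqref{eq:one-step} now reads $\Psi(x+1,\cdot)=c'J\Psi(x,\cdot)$ with $c'=e^{2\pi i\beta}c$, which is the meaning of ``replaces $c$ by $e^{2\pi i\beta}c$''.

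Next I will apply the proof of Theorem~\ref{thm:fourier} to this condition. That proof never used $c\in\{\pm1,\pm i\}$; it only used $|c|=1$, which is what makes $cj$ a unimodular scalar quasiperiod on $\mathcal K_j$. So on $\mathcal K_j$ the shift $\theta'_{j}$ is defined by $e^{2\pi i\theta'_j}=c'j=e^{2\pi i\beta}\,cj=e^{2\pi i(\beta+\theta_{c,j})}$. Hence $\theta'_j\equiv\theta_{c,j}+\beta \pmod 1$ for both $j=\pm1$, and the expansion \eqref{eq:Fourier} and Parseval identity \eqref{eq:parseval} remain valid.

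The last claim is interpretive, and making it precise is the only delicate point. Theorem~\ref{thm:lifts} together with the torsor count of \cite{KT} shows that the Pin structures are exactly the two values of $\varepsilon$ for each $\delta$, that is $c\in\{\pm1\}$ or $c\in\{\pm i\}$. For generic $\beta$ the product $e^{2\pi i\beta}c$ lies outside these sets, so the twisted bundle is not $S_{c''}$ for any Pin structure as a flat bundle, since the holonomy $e^{2\pi i\beta}c\,\sx$ is conjugation-invariant data. The twisted bundle is therefore an associated pinor bundle carrying an extra $U(1)$ connection, in the spin$^c$-type sense. The special values $\beta\in\frac12\Z$ merely permute $\varepsilon$, consistent with the torsor action described in Theorem~\ref{thm:lifts}.
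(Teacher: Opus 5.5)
Your proposal is correct and follows the paper's route: the twist multiplies the deck matrix $R_c$ by the scalar $e^{2\pi i\beta}$, and the proof of Theorem~\ref{thm:fourier} uses only $|c|=1$, so both phases shift by $\beta$ modulo one. Your holonomy-eigenvalue argument for the second sentence adds detail that the paper's two-line proof leaves to the count in Theorem~\ref{thm:lifts}. One small addition: $\beta\in\frac14+\frac12\Z$ is also a special value, since it carries the $\Pin^+$ values $c=\pm1$ to the $\Pin^-$ values $\pm i$.
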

\begin{proof}
The tensor-product deck representation multiplies $R_c$ by the scalar holonomy. Apply Theorem~\ref{thm:fourier} to the resulting one-step operator.
\end{proof}

\section{Geometric Sobolev spaces}\label{sec:sobolev}
For an integer $m\geq0$, define $H_c^m(M;S_c)$ using the flat connection and all weak covariant derivatives of order at most $m$. No physical boundary condition is imposed in this definition. Equivalently, use equivariant distributions on the strip whose derivatives have finite $L^2(F)$ norms. For $s=m+\alpha\geq0$, $0<\alpha<1$, use complex interpolation between adjacent integer orders. These are the usual nonnegative bundle Sobolev spaces on the compact manifold with boundary.

\begin{theorem}[Sobolev decomposition and embeddings]\label{thm:sobolev}
For each integer $m\geq0$, the Fourier transform in Theorem~\ref{thm:fourier} identifies $H_c^m$ with the coefficient families satisfying
\begin{equation}\label{eq:sobolevnorm}
 \sum_{j,k}\sum_{r=0}^{m}(1+q_{c,j,k}^2)^{m-r}
       \norm{\partial_w^r\phi_{j,k}}_{L^2(-1,1)}^2<\infty.
\end{equation}
The square root of this expression is equivalent to the geometric norm. All $H_c^s$, $s\geq0$, are complete, and smooth equivariant sections are dense. For $t\geq s\geq0$, $H_c^t\hookrightarrow H_c^s$ continuously; for $t>s$ the inclusion is compact. Translation in $x$ and $J$ act continuously and preserve the sector decomposition at every order.
\end{theorem}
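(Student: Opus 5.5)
The plan is to work entirely on the fundamental domain $F=[0,1]\times[-1,1]$ and transport the geometric norm through Theorem~\ref{thm:fourier}. Because the connection is flat and $R_c$ is constant and unitary, covariant derivatives in the coordinate trivialization are ordinary partial derivatives. The geometric $H^m$ norm is therefore equivalent to $\sum_{a+r\le m}\norm{\partial_x^a\partial_w^r\Psi}_{L^2(F)}^2$, and the task reduces to three steps.

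First, I will check that $\partial_x$ and $\partial_w$ preserve equivariance up to a twist. If $\Psi$ satisfies \eqref{eq:one-step}, then $\partial_x\Psi$ satisfies \eqref{eq:one-step} as well. The $w$-derivative instead satisfies $\partial_w\Psi(x+1,w)=-c\sx(\partial_w\Psi)(x,-w)$, so it is equivariant for $-cJ$. Applied to the expansion \eqref{eq:Fourier}, the first fact gives
\[
\partial_x^a\partial_w^r\Psi=\sum_{j,k}(iq_{c,j,k})^a e^{iq_{c,j,k}x}\partial_w^r\phi_{j,k}.
\]
Since $\mathcal P\partial_w=-\partial_w\mathcal P$, the function $\partial_w^r\phi_{j,k}$ lies in $\mathcal K_{(-1)^rj}$. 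For fixed $r$ the coefficient families therefore still live in the two orthogonal subspaces $\mathcal K_\pm$, and the exponents $q_{c,j,k}$ stay attached to $j$. I will apply Parseval separately in each $j$-sector, using the vector-valued Fourier theorem on $\mathcal K_{\pm}$ after removing the factor $e^{2\pi i\theta x}$. This gives
\[
\norm{\partial_x^a\partial_w^r\Psi}_{L^2(F)}^2=\sum_{j,k}q_{c,j,k}^{2a}\norm{\partial_w^r\phi_{j,k}}^2.
\]
Summing over $a\le m-r$ and using $\sum_{a\le N}q^{2a}\asymp(1+q^2)^N$ yields \eqref{eq:sobolevnorm} with equivalent norms.

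The main obstacle is justifying this identity for weak derivatives, that is, showing both that each coefficient lies in $H^m(-1,1)$ and the converse direction. For that I would test against products $e^{iq x}\chi(w)$ with $\chi\in C_c^\infty(-1,1)$, or more cleanly compute $\phi_{j,k}$ as $\Pi_j\int_0^1 e^{-iq_{c,j,k}x}\Psi\,dx$. Weak $\partial_w$ commutes with this Bochner integral, and weak $\partial_x$ corresponds to multiplication by $iq$ after an integration by parts. The boundary terms at $x=0,1$ cancel exactly because of the quasiperiodicity. The converse follows by summing the series and taking limits in the weighted norm. Completeness is then immediate, since the space is isometric to a weighted $\ell^2$ space of $H^m(-1,1)$-valued families.

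For the remaining claims, density of smooth sections follows from truncating in $k$ and approximating each $\phi_{j,k}$ in $H^m(-1,1)\cap\mathcal K_j$ by smooth functions. I would take $\Pi_j$ of a smooth approximant; this is harmless because $J$ is bounded on $H^m(-1,1)$. Fractional orders are defined by interpolation, so continuous embeddings follow by interpolating the integer inclusions. For compactness when $t>s$, it suffices to prove that $H^{1}_c\hookrightarrow L^2$ is compact, and then to interpolate using the compactness results for interpolation spaces, or to argue via a Rellich theorem directly. For the integer case I would truncate: the tail $|k|>K$ has small norm $\le(1+q^2)^{-1}$ times the $H^1$ norm. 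The finitely many remaining modes each embed compactly, since $H^1(-1,1)\hookrightarrow L^2$ is compact by Rellich. A norm limit of compact operators is compact. Alternatively, I would cite Rellich on the compact manifold $M$ directly.

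Finally, translation by $t$ multiplies each coefficient by $e^{iq t}$, which is an isometry for \eqref{eq:sobolevnorm}. The operator $J$ acts as $\pm1$ on each $\phi_{j,k}$ and commutes with $\partial_w^r$ up to sign, so it is bounded at every integer order. Boundedness at fractional orders then follows by interpolation, and both operators preserve the sector decomposition.
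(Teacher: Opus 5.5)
Your proposal is correct. Your norm identification matches the paper's: Parseval on each $\partial_x^a\partial_w^r\Psi$, with $\partial_w$ carrying $\mathcal K_j$ into $\mathcal K_{-j}$ while the shift stays attached to $j$, then $\sum_{a\le N}q^{2a}\asymp(1+q^2)^N$. The remaining analytic claims follow a genuinely different route.

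The paper leaves the coefficient picture. It proves completeness and density on the length-two cylinder by extension, convolution and a partition of unity, then imposes equivariance with the bounded projection $(I+G)/2$, where $G\Psi(x,w)=R_c^{-1}\Psi(x+1,-w)$. It obtains compactness and the fractional orders by extending across the transverse endpoints to a torus, untwisting by a phase, and using the weights $(1+\ell^2+n^2)^s$. You instead read completeness and density directly off the coefficient isomorphism, and you get compactness from the $k$-tail bound plus Rellich on the interval for finitely many modes.

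Your route needs only one-dimensional Sobolev facts. It also makes explicit that compactness requires transverse regularity in addition to longitudinal decay, which is the point of Remark~\ref{rem:anisotropic}. The paper's route additionally identifies the interpolation spaces with the usual fractional Sobolev spaces on $M$, whereas your fractional claims rest on abstract interpolation.

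Two precisions would tighten your argument.

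First, compactness of $H^1_c\hookrightarrow L^2$ alone does not reach higher orders. Your truncation does, however, give compactness of $H^{m+1}_c\hookrightarrow H^m_c$ for every integer $m$. The reason is that each order-$m$ weight in \eqref{eq:sobolevnorm} equals $(1+q^2)^{-1}$ times the corresponding order-$(m+1)$ weight.

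Second, for fractional nesting, density and compactness, the clean justification is Hilbert-couple interpolation. Write $[H^m_c,H^{m+1}_c]_\alpha=\Dom(\Lambda_m^\alpha)$, where $\Lambda_m$ is the positive self-adjoint operator on $H^m_c$ with domain $H^{m+1}_c$ and $\norm{\Lambda_m u}_{H^m_c}=\norm{u}_{H^{m+1}_c}$. Compactness of $\Lambda_m^{-1}$ then gives compactness of $\Lambda_m^{\beta-\alpha}$ for $\alpha>\beta$.
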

\begin{proof}
For smooth sections, apply Parseval to every $\partial_x^a\partial_w^r$ with $a+r\leq m$. Differentiation in $w$ changes $\mathcal K_j$ to $\mathcal K_{(-1)^rj}$, preserving orthogonality of the two summands. For each fixed $r$, the longitudinal weight is $\sum_{a=0}^{m-r}q^{2a}$, uniformly equivalent to $(1+q^2)^{m-r}$. This proves \eqref{eq:sobolevnorm}; the distributional characterization gives the converse as well.

Here are analytic details concerning descent. On the length-two cylinder consider fields satisfying $\Psi(x+2,w)=c^2\Psi(x,w)$. They are sections of a scalar flat twist of a trivial rank-two bundle. In these spaces
\[
 (G\Psi)(x,w)=R_c^{-1}\Psi(x+1,-w)
\]
is an isometric involution. The bounded projection $(I+G)/2$ has range exactly the descended sections. Integer Sobolev spaces on this cylinder are complete by the weak-derivative definition. Smooth approximation near an interval endpoint can be obtained by a bounded extension, followed by convolution; a finite partition of unity gives global smooth approximation. Projecting these approximants by $(I+G)/2$ preserves convergence and imposes equivariance. This proves completeness and density downstairs. Local trivializations have constant unitary transition matrices, so the derivative norms just used are equivalent to the chart-and-partition definitions of the bundle norms.

For clarity, the compactness assertion uses both variables. On any bounded range of Sobolev orders, extend the transverse interval to a larger circle by a bounded interval extension and a cutoff. Untwist the length-two longitudinal boundary condition by multiplication by a fixed smooth phase. On the resulting torus the Sobolev norms are equivalent to weights $(1+\ell^2+n^2)^s$. The inclusion from order $t$ to $s<t$ is a norm limit of finite Fourier truncations, since the tail bound is $O(N^{s-t})$. Restriction and the equivariance projection prove compactness on $M$. The same extension, restriction and Fourier argument identifies the interpolation spaces with the usual fractional Sobolev spaces. Continuity of embeddings follows from the weights and interpolation. Translation commutes with all derivatives and with $G$. Reflection changes derivative signs, and multiplication by $\sx$ is unitary; hence $J$ preserves the norms and their interpolations.
\end{proof}

\begin{remark}[Longitudinal regularity is not full regularity]\label{rem:anisotropic}
The norm $\sum_{j,k}(1+q_{c,j,k}^2)^s\norm{\phi_{j,k}}_{L^2}^2$ measures only regularity in $x$. Its inclusion into $L^2$ is not compact: fix one longitudinal mode and choose an orthonormal sequence in its infinite-dimensional $\mathcal K_j$. Also, the product weight $(1+q^2)^s\norm{\phi}_{H^s_w}^2$ describes stronger mixed regularity, not the isotropic two-dimensional norm \eqref{eq:sobolevnorm}. Negative orders and boundary traces require a choice of dual or completion; we do not identify all such choices. An all-real operator scale is specified separately in Section~\ref{sec:scale}.
\end{remark}

\section{The Dirac expression and its physical boundary domain}\label{sec:domain}
The flat connection and the Clifford multiplication from Lemma~\ref{lem:covariance} give
\begin{equation}\label{eq:D}
 D=-i(\sx\partial_x+\sy\partial_w),\qquad
 D^2=-(\partial_x^2+\partial_w^2)I.
\end{equation}
The principal symbol is $\gamma(\xi)=\xi_x\sx+\xi_w\sy$, whose square is $|\xi|^2I$. Thus $D$ is elliptic. The identities
\[
 R_c\sx=\sx R_c,\qquad R_c\sy=-\sy R_c
\]
show, by differentiating \eqref{eq:equiv}, that $D$ preserves equivariance. The factor $-i$ in \eqref{eq:D} is part of the formally self-adjoint convention.

Let $t=\partial_x$ be the global unit longitudinal vector field and fix $b\in\{+1,-1\}$. The physical boundary condition is
\begin{equation}\label{eq:BC}
 (I-b\gamma(t))\Psi|_{\partial M}=0,
 \quad\text{equivalently}\quad \sx\Psi(x,\pm1)=b\Psi(x,\pm1).
\end{equation}
Both lifts of the boundary use the same eigenspace, and $R_c$ preserves it. This is one condition on the single boundary component. It is distinct from the identification \eqref{eq:equiv}.

\begin{definition}\label{def:realization}
The Hilbert space is $\mathcal H_c=L^2(M;S_c)$, with inner product conjugate-linear in the first entry. Define
\[
 \Dom(D_{c,b})=\{\Psi\in H_c^1:(I-b\sx)\Psi(x,\pm1)=0\},
 \qquad D_{c,b}\Psi=D\Psi.
\]
Boundary equalities are trace equalities. For $s\geq0$, write $H_{c,b}^{s+1}$ for the subspace of $H_c^{s+1}$ satisfying the same trace condition.
\end{definition}

\begin{proposition}[Boundary symmetry and ellipticity]\label{prop:boundary}
Condition \eqref{eq:BC} is a local self-adjoint elliptic boundary condition in the boundary-symbol sense.
\end{proposition}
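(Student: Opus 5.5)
The plan has three parts. First, show that the condition is local and globally defined. Second, show that it is symmetric, via Green's formula. Third, show that it satisfies the Lopatinskii--Shapiro (Shapiro--Lopatinskii) condition at every boundary covector.

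\emph{Locality.} The projector $B_b=\tfrac12(I-b\sx)=\tfrac12(I-b\gamma(t))$ is a constant rank-one orthogonal projection. It commutes with $R_c$, because $R_c=c\sx$. Hence it descends to a smooth field of projections on $S_c|_{\partial M}$, which is the globality noted after \eqref{eq:BC}.

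\emph{Symmetry.} Integrate by parts on the fundamental domain $F$ for smooth equivariant $\Psi,\Phi$. The contributions of the two ends $x=0$ and $x=1$ cancel by \eqref{eq:equiv}, unitarity of $R_c$, and $R_c\sx=\sx R_c$. What remains is
\[
\ip{D\Psi}{\Phi}-\ip{\Psi}{D\Phi}=\int_0^1\Bigl[\ip{-i\sy\Psi}{\Phi}\Bigr]_{w=-1}^{w=1}dx .
\]
Thus the boundary form is $\ip{-i\gamma(\nu)\Psi}{\Phi}$, with $\gamma(\nu)=\pm\sy$.

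Now suppose $\Psi,\Phi\in E_b(\sx)$. Since $\sy$ anticommutes with $\sx$, it maps $E_b(\sx)$ into $E_{-b}(\sx)$. Because $E_b$ and $E_{-b}$ are orthogonal, the integrand vanishes. So the condition is symmetric.

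For the self-adjoint, or maximal, form we need more: the annihilator of $E_b$ under this form should be exactly $E_b$. Here $\gamma(\nu)E_b=E_{-b}=E_b^\perp$. Equivalently, $\gamma(\nu)$ maps the kernel of the boundary projector onto its orthogonal complement, which is the standard pointwise criterion for a self-adjoint local boundary condition in the sense of \cite{BB,GUV}.

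\emph{Ellipticity, which is the main step.} Freeze coefficients at a boundary point, say $w=1$ with inward normal $-\partial_w$; the case $w=-1$ follows by the reflection symmetry. Take a tangential covector $\xi\neq0$. The model ODE on the half-line $s=1-w\ge0$ is
\[
(\xi\sx+i\sy\partial_s)\phi=0 ,
\]
where we have replaced $\partial_x$ by $i\xi$ and used $\partial_w=-\partial_s$. Multiplying by $\sy$, this becomes
\[
\partial_s\phi=-i\xi\,\sy\sx\,\phi=-\xi\sz\phi ,
\]
using $\sy\sx=-i\sz$. The bounded (decaying) solutions are therefore $\phi(0)\in E_{\operatorname{sgn}\xi}(\sz)$, a one-dimensional space.

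The Lopatinskii--Shapiro condition requires that $B_b$ restricted to this space be injective, i.e.\ that $E_{\pm1}(\sz)\cap E_b(\sx)=0$. This holds because the eigenvectors of $\sz$ are $(1,0)$ and $(0,1)$, while those of $\sx$ are $(1,\pm1)$. The ranks match: there is one boundary condition and a one-dimensional decaying space, so injectivity gives an isomorphism.

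The point needing care is the sign and orientation conventions. One must make sure the decaying space is computed with the correct inward normal at each of $w=\pm1$. This can be checked directly: at $w=-1$ the decaying space is $E_{-\operatorname{sgn}\xi}(\sz)$, which is still transverse to $E_b(\sx)$. So the condition holds on both boundary lifts, uniformly in $|\xi|=1$. This transversality also yields the uniform boundary-symbol estimate used later.
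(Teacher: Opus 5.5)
Your proposal is correct and follows the paper's argument. The steps match: Green's identity with the seam terms cancelled by equivariance, $E_b(\sx)$ isotropic and equal to its own annihilator because $\sy$ exchanges the $\sx$ eigenspaces, and the Shapiro--Lopatinski condition from transversality of the $\sz$ eigenlines to $E_b(\sx)$.

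There is one harmless sign slip. Dividing $i\partial_s\phi=-\xi\sy\sx\phi$ by $i$ gives $\partial_s\phi=i\xi\sy\sx\phi=\xi\sz\phi$. So the decaying line is $E_{-\operatorname{sgn}\xi}(\sz)$ at $w=1$ and $E_{\operatorname{sgn}\xi}(\sz)$ at $w=-1$, the reverse of your assignment. For example, $D$ annihilates $e^{i\xi x}e^{\xi w}(0,1)^T$ when $\xi>0$. The slip does not affect the conclusion, since both $\sz$ eigenlines meet $E_b(\sx)$ trivially.
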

\begin{proof}
Green's identity, with the stated inner-product convention, is
\[
 \ip{Du}{v}-\ip{u}{Dv}=i\int_{\partial M}u^*\gamma(\nu)v,
\]
where $\nu$ is the outward normal. On the two lifted edges $\gamma(\nu)=\pm\sy$. Since $\sy$ interchanges the $\sx$ eigenspaces, the line $E_b(\sx)$ is isotropic and equal to its annihilator for this form. The seam terms cancel by equivariance and $R_c^*\sx R_c=\sx$.

For the complementing condition, freeze a nonzero tangential frequency $\xi$ and use an inward half-line normal coordinate. The decaying solutions of the zero-energy principal ODE have boundary values in one of the two $\sz$ eigenspaces; changing the sign of $\xi$ or the edge interchanges them. Neither coordinate line $E_{\pm1}(\sz)$ intersects $E_b(\sx)$ nontrivially. Each has dimension one, so the boundary projection is an isomorphism on the decaying subspace. This is the Shapiro--Lopatinski condition. The actual adjoint domain, rather than only its boundary symbol, is proved next.
\end{proof}

\section{Closedness, Fredholmness and compact resolvent}\label{sec:fredholm}
We give a direct proof for \eqref{eq:BC}. It also specifies exactly which regularity is controlled by the graph norm.

\begin{lemma}[Transverse self-adjointness]\label{lem:transverse}
On $\mathcal K$, let
\[
 T_b=-i\sy\frac{d}{dw},\qquad
 \Dom(T_b)=\{\phi\in H^1([-1,1];\C^2):\sx\phi(\pm1)=b\phi(\pm1)\}.
\]
Then $T_b$ is self-adjoint. The involution $J$ preserves its domain and commutes with it. For every $q\in\R$, the operator
\[
 D_q=q\sx+T_b
\]
with the same domain is self-adjoint, reduces $\mathcal K_\pm$, and satisfies
\begin{equation}\label{eq:energy}
 \norm{D_q\phi}^2=q^2\norm{\phi}^2+\norm{\phi'}^2.
\end{equation}
\end{lemma}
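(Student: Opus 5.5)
The plan is to treat $T_b$ as a first-order operator on an interval and compute its adjoint directly from the boundary form. Then $J$-invariance and the identity \eqref{eq:energy} follow from the Pauli algebra and the fact that the boundary line $E_b(\sx)$ is isotropic for the relevant Hermitian forms.

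\emph{Self-adjointness of $T_b$.} For $u,v\in H^1([-1,1];\C^2)$, integration by parts gives
\[
 \ip{T_bu}{v}-\ip{u}{T_bv}=-i\bigl[u^*\sy v\bigr]_{-1}^{1}
\]
(up to the overall sign fixed by the convention). Since $\sy$ anticommutes with $\sx$, it maps $E_b(\sx)$ onto $E_{-b}(\sx)=E_b(\sx)^\perp$. Hence $u^*\sy v=0$ whenever $u(\pm1),v(\pm1)\in E_b(\sx)$, so $T_b$ is symmetric.

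For the adjoint, first note that $T_b\supset T_{\min}$, where $T_{\min}$ is the closure of the operator on $C_c^\infty(-1,1)$. Therefore $T_b^*\subset T_{\min}^*=T_{\max}$, the operator with domain $H^1$. This is the standard one-dimensional fact that a weak derivative in $L^2$ means membership in $H^1$. Now take $v\in\Dom(T_b^*)$. For each endpoint and each $e\in E_b(\sx)$, choose $u\in\Dom(T_b)$ equal to $e$ near that endpoint and vanishing near the other. The boundary form then forces $e^*\sy v(\pm1)=0$. So $\sy v(\pm1)\perp E_b(\sx)$, that is $\sy v(\pm1)\in E_{-b}(\sx)$, and therefore $v(\pm1)\in E_b(\sx)$. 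Thus $\Dom(T_b^*)=\Dom(T_b)$. I expect this adjoint-domain step to be the only genuinely non-formal point, and the cutoff test functions handle it.

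\emph{Commutation with $J$.} First, $\mathcal P\,\frac{d}{dw}=-\frac{d}{dw}\,\mathcal P$ and $\sx\sy=-\sy\sx$, so $JT_b=T_bJ$ on $H^1$. Second, $(J\phi)(\pm1)=\sx\phi(\mp1)$, and $\sx$ preserves $E_b(\sx)$, so $J$ maps $\Dom(T_b)$ onto itself. Third, $\sx$ commutes with $\mathcal P$ and with itself, so $J$ commutes with $q\sx$. Consequently the projections $\Pi_j$ preserve $\Dom(D_q)$ and commute with $D_q$, which means $D_q$ reduces $\mathcal K_\pm$. Self-adjointness of $D_q$ follows because $q\sx$ is a bounded symmetric perturbation of $T_b$ (the Kato--Rellich theorem with relative bound $0$).

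\emph{Energy identity.} Expand
\[
 \norm{D_q\phi}^2=q^2\norm{\phi}^2+\norm{\phi'}^2+2q\,\mathrm{Re}\ip{\sx\phi}{-i\sy\phi'},
\]
using that $\sx$ and $\sy$ are unitary. Since $\sx\sy=i\sz$, the cross term equals
\[
 2q\,\mathrm{Re}\int\phi^*\sz\phi'\,dw=q\bigl[\phi^*\sz\phi\bigr]_{-1}^{1}.
\]
This boundary term vanishes because $\sz$ anticommutes with $\sx$, so $\sz$ maps $E_b(\sx)$ into its orthogonal complement. That proves \eqref{eq:energy}. For $\phi\in H^1$, I justify the step $2\mathrm{Re}\int\phi^*\sz\phi'=[\phi^*\sz\phi]$ first for smooth $\phi$ and then extend it by density of smooth functions in $H^1$ together with continuity of the trace.
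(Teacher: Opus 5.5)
Your proof is correct and follows essentially the same route as the paper: interior testing places the adjoint domain inside $H^1$; endpoint values are forced into $E_b(\sx)$ because $\sy$ swaps the $\sx$ eigenspaces; $J$-invariance follows from $\mathcal P\frac{d}{dw}=-\frac{d}{dw}\mathcal P$ and $\sx\sy=-\sy\sx$; self-adjointness of $D_q$ comes from the bounded symmetric term $q\sx$; and the energy identity follows because the boundary term $q[\phi^*\sz\phi]_{-1}^{1}$ vanishes. With the paper's inner-product convention the Green form is $+i[u^*\sy v]_{-1}^{1}$ rather than $-i[u^*\sy v]_{-1}^{1}$, but, as you anticipated, the sign plays no role.
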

\begin{proof}
The boundary form of $T_b$ vanishes on the specified endpoint lines. If $v\in\Dom(T_b^*)$, testing against compactly supported functions shows distributionally that $v'\in L^2$, hence $v\in H^1$. The endpoint values of test functions in $\Dom(T_b)$ vary independently in $E_b(\sx)$. Green's identity therefore forces $v(\pm1)$ into the annihilator of this line for $\sy$, which is the same line. This proves equality of the domains of $T_b$ and its adjoint.

Reflection reverses the derivative, and $\sx\sy=-\sy\sx$; thus $J$ commutes with $T_b$. It also preserves the endpoint condition. The bounded Hermitian operator $q\sx$ preserves this domain and commutes with $J$. The adjoint of its sum with $T_b$ has the domain of $T_b^*$, proving self-adjointness of $D_q$. The projections $(I\pm J)/2$ commute with the operator and preserve the domain, so their restrictions are self-adjoint. Expanding the squared norm gives the cross term
\[
 2q\operatorname{Re}\int_{-1}^1\phi^*\sz\phi'
 =q[\phi^*\sz\phi]_{-1}^{1}=0,
\]
where the last equality follows because $\sz$ interchanges the $\sx$ eigenspaces. This proves \eqref{eq:energy}.
\end{proof}

\begin{theorem}[Closed self-adjoint realization]\label{thm:SA}
The operator $D_{c,b}$ is densely defined and self-adjoint. Its domain is exactly Definition~\ref{def:realization}, and
\begin{equation}\label{eq:graph}
 \norm{\Psi}^2+\norm{D_{c,b}\Psi}^2
 =\norm{\Psi}^2+\norm{\partial_x\Psi}^2+\norm{\partial_w\Psi}^2.
\end{equation}
Smooth equivariant sections satisfying \eqref{eq:BC} form a core. In particular, their operator is closable and its closure is $D_{c,b}$.
\end{theorem}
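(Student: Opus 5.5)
The plan is to reduce everything to the transverse operators of Lemma~\ref{lem:transverse}, using the Fourier decomposition of Theorem~\ref{thm:fourier} and the Sobolev identification of Theorem~\ref{thm:sobolev}. Write $\Psi=\sum_{j,k}e^{iq_{c,j,k}x}\phi_{j,k}$ with $\phi_{j,k}\in\mathcal K_j$. On a single mode, $D$ acts as $D_{q}$ with $q=q_{c,j,k}$, since $-i\sx\partial_x$ becomes multiplication by $q\sx$. The trace condition \eqref{eq:BC} is imposed modewise because the trace commutes with the Fourier expansion. More precisely, $H^1_c$ coefficients have $\phi_{j,k}\in H^1$ with $\sum(1+q^2)\norm{\phi}^2+\norm{\phi'}^2<\infty$ by \eqref{eq:sobolevnorm}, and the trace at $w=\pm1$ is continuous into $L^2$ of the boundary, which is a Fourier series in $x$. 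Hence $\Psi\in\Dom(D_{c,b})$ if and only if each $\phi_{j,k}\in\Dom(T_b)\cap\mathcal K_j$ and the $H^1$ weight is summable.

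The graph identity \eqref{eq:graph} then follows by summing \eqref{eq:energy} over modes with Parseval \eqref{eq:parseval}. The right side of \eqref{eq:energy} is $q^2\norm{\phi}^2+\norm{\phi'}^2$, and these are exactly the modal contributions of $\norm{\partial_x\Psi}^2$ and $\norm{\partial_w\Psi}^2$; note that $\partial_w$ maps $\mathcal K_j$ to $\mathcal K_{-j}$, but the norms still add by orthogonality.

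Self-adjointness comes from the direct sum $D_{c,b}=\bigoplus_{j,k}D_{q_{c,j,k}}|_{\mathcal K_j}$. Each summand is self-adjoint by Lemma~\ref{lem:transverse}. A direct sum of self-adjoint operators, taken on the domain $\{\sum\phi_{j,k}:\phi_{j,k}\in\Dom,\ \sum\norm{D_q\phi_{j,k}}^2<\infty\}$, is self-adjoint. One checks this by showing that $D\pm i$ are surjective modewise with $\norm{(D_q\pm i)^{-1}}\le1$, so the inverses sum to bounded operators. The key point is that this maximal direct-sum domain coincides with $\Dom(D_{c,b})$. By \eqref{eq:energy}, $\sum\norm{D_q\phi}^2<\infty$ is equivalent to the $H^1$ summability in \eqref{eq:sobolevnorm} with $m=1$, so the direct-sum domain is exactly the trace-constrained subspace of $H^1_c$. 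Density is immediate, because finite sums of modes with $\phi_{j,k}\in C_c^\infty(-1,1)$, projected to $\mathcal K_j$, are dense. Closedness is automatic for a self-adjoint operator.

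For the core statement, I take a graph-norm approximation by finite Fourier truncations. By \eqref{eq:graph}, these converge in $H^1$ and hence in the graph norm. Each retained $\phi_{j,k}\in\Dom(T_b)\cap\mathcal K_j$ must then be approximated in $H^1([-1,1])$ by smooth functions with the same endpoint condition. To do this, pick a smooth $\chi$ with constant value $\phi(\pm1)\in E_b(\sx)$ near the endpoints, approximate $\phi-\chi\in H^1_0$ by $C_c^\infty$ functions, and apply $\Pi_j$; this preserves the endpoint line because $J$ does. The resulting finite sums are smooth equivariant sections satisfying \eqref{eq:BC}. Their operator is symmetric by Proposition~\ref{prop:boundary}, hence closable, and its closure contains, and is contained in, $D_{c,b}$.

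The main obstacle I expect is the identification of the maximal direct-sum domain with the trace-defined $H^1$ domain. This requires that the boundary trace of an $H^1_c$ section be computed modewise, with no loss of the endpoint condition in the $L^2$ limit. I would handle it using continuity of the trace $H^1(F)\to L^2(\partial F)$ together with orthogonality of the boundary Fourier exponentials.
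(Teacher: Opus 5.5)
Your proposal is correct and follows essentially the same route as the paper. Both reduce by Fourier decomposition to the blocks $D_{q_{c,j,k}}|_{\mathcal K_j}$, obtain self-adjointness of the direct sum from resolvents at $\pm i$ of norm at most one, get the graph identity by summing \eqref{eq:energy}, and build the core by longitudinal truncation followed by transverse smooth approximation with an endpoint correction; your $\chi+H^1_0$ device plays the role of the paper's linear endpoint corrections. Your explicit modewise treatment of the trace, which the paper leaves implicit, is sound provided you run the orthogonality step sector by sector or on the length-two boundary circle, since the two sectors' exponentials are not mutually orthogonal on $[0,1]$.
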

\begin{proof}
Use Theorem~\ref{thm:fourier}. In its $(j,k)$ summand the differential expression is
\[
 D_{q_{c,j,k}}|_{\mathcal K_j}.
\]
By Lemma~\ref{lem:transverse} each of these is self-adjoint on the intersection of $\Dom(T_b)$ with $\mathcal K_j$. Their Hilbert direct sum is self-adjoint on the domain where both the coefficients and their images are square summable. To see this without a domain assumption, the resolvent at $i$ of each summand has norm at most one, and their direct sum is an everywhere-defined inverse; the same is true at $-i$. A densely defined symmetric direct sum with both these ranges equal to the Hilbert space is self-adjoint.

Summing \eqref{eq:energy} identifies its domain with the coefficient condition for $H_c^1$ together with \eqref{eq:BC}, and proves \eqref{eq:graph}. This also proves closedness directly. For the core assertion first truncate the longitudinal expansion in graph norm. In each of the finitely many transverse domains, smooth $H^1$ approximation can be followed by linear endpoint corrections to put both traces in $E_b(\sx)$. The corrections tend to zero in $H^1$ because the trace map is continuous; finally apply $(I+jJ)/2$. The resulting smooth functions converge in the same graph norm. Such a domain is dense in $L^2$, for example by first approximating with functions supported away from the physical boundary and then imposing equivariance.
\end{proof}

\begin{theorem}[Compact resolvent and Fredholm property]\label{thm:compact}
The resolvent of $D_{c,b}$ is compact. Its spectrum is real and discrete, each eigenvalue has finite multiplicity, and there is an orthonormal basis of eigenvectors. Its kernel is finite dimensional, its range is closed, and
\[
 \ran D_{c,b}=(\ker D_{c,b})^\perp,\qquad \ind D_{c,b}=0.
\]
Here the index is the ordinary complex Fredholm index of the ungraded operator.
\end{theorem}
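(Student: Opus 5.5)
The plan is to derive compactness of the resolvent from the exact graph identity \eqref{eq:graph} combined with the compact Sobolev embedding of Theorem~\ref{thm:sobolev}. By Theorem~\ref{thm:SA}, $D_{c,b}$ is self-adjoint, so $\pm i$ lie in its resolvent set. The resolvent $(D_{c,b}-i)^{-1}$ maps $\mathcal H_c$ boundedly into $\Dom(D_{c,b})$ with the graph norm. By \eqref{eq:graph}, the graph norm equals $\norm{\Psi}^2+\norm{\partial_x\Psi}^2+\norm{\partial_w\Psi}^2$, which is equivalent to the $H_c^1$ norm; the flat covariant derivatives are just the coordinate derivatives in the equivariant trivialization. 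Hence the resolvent factors as a bounded map $\mathcal H_c\to H^1_{c,b}\subset H_c^1$ followed by the inclusion $H_c^1\hookrightarrow H_c^0=\mathcal H_c$. That inclusion is compact by Theorem~\ref{thm:sobolev} with $t=1>s=0$, so the resolvent is compact. This factorization through the two-variable compact embedding is the only substantive step. Remark~\ref{rem:anisotropic} shows why longitudinal control alone would not suffice, and \eqref{eq:graph} is precisely what supplies the transverse derivative as well.

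The spectral statements then follow from standard facts about self-adjoint operators with compact resolvent. The resolvent $(D_{c,b}-i)^{-1}$ is a compact normal operator, so the spectral theorem for compact normal operators gives an orthonormal basis of eigenvectors. Its nonzero eigenvalues $\mu$ have finite multiplicity and accumulate only at $0$. Since $D_{c,b}$ is self-adjoint, each eigenvalue $\lambda=i+\mu^{-1}$ is real with the same eigenvectors and multiplicities. These $\lambda$ form a discrete set accumulating only at infinity, and every other point of $\C$ lies in the resolvent set. In particular, $0$ is either in the resolvent set or an isolated eigenvalue of finite multiplicity, so $\ker D_{c,b}$ is finite dimensional.

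For the range, expand $\Psi$ in the eigenbasis. On $(\ker D_{c,b})^\perp$ the operator is bounded below by $\min\{|\lambda|:\lambda\in\spec D_{c,b},\ \lambda\neq0\}>0$, and this minimum is positive by discreteness. Hence the range is closed. Self-adjointness gives $\ran D_{c,b}=\overline{\ran D_{c,b}}=(\ker D_{c,b}^*)^\perp=(\ker D_{c,b})^\perp$. The cokernel $\mathcal H_c/\ran D_{c,b}$ is therefore isomorphic to $\ker D_{c,b}$, so $D_{c,b}$, viewed as a map from $\Dom(D_{c,b})$ with the graph norm to $\mathcal H_c$, is Fredholm. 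Its index is $\dim\ker D_{c,b}-\dim\ker D_{c,b}^*=0$. I expect no real obstacle beyond checking the equivalence of the graph norm with the $H_c^1$ norm, and that equivalence is immediate from \eqref{eq:graph}.
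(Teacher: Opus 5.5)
Your proposal is correct and follows the paper's proof essentially step for step. The graph identity \eqref{eq:graph} makes $(D_{c,b}-i)^{-1}$ bounded into $H_c^1$, the compact embedding $H_c^1\hookrightarrow L^2$ from Theorem~\ref{thm:sobolev} gives compactness, and the compact normal spectral theorem together with self-adjointness then gives the discrete real spectrum, the gap at zero, closed range, and index zero; your explicit eigenvalue correspondence $\lambda=i+\mu^{-1}$ and lower bound on $(\ker D_{c,b})^\perp$ only spell out details that the paper states briefly.
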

\begin{proof}
Self-adjointness gives $(D_{c,b}-i)^{-1}$ on all of $\mathcal H_c$. The identity
\[
 \norm{(D_{c,b}-i)\Psi}^2=\norm{D_{c,b}\Psi}^2+\norm{\Psi}^2
\]
and \eqref{eq:graph} make this inverse bounded into $H_c^1$. The inclusion $H_c^1\hookrightarrow L^2$ is compact by Theorem~\ref{thm:sobolev}. Thus the resolvent is compact. The compact normal spectral theorem applied to the resolvent gives the stated discrete spectral decomposition. Only finitely many eigenvalues can lie in a bounded interval, so the nonzero spectrum has a positive distance from zero. The inverse on the orthogonal complement of the kernel is bounded, which proves closed range. Self-adjointness identifies the cokernel with the kernel and gives index zero.
\end{proof}

\begin{theorem}[Geometric Sobolev Fredholm mappings]\label{thm:HsFredholm}
For every real $s\geq0$,
\begin{equation}\label{eq:HsFredholm}
 D:H_{c,b}^{s+1}\longrightarrow H_c^s
\end{equation}
 is bounded and Fredholm of index zero. As an unbounded operator on $H_c^s$ with this domain, it is closed. The kernel consists of smooth sections and is independent of $s$.
\end{theorem}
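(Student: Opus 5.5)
I would prove this by elliptic regularity for the boundary problem, carried out explicitly in the Fourier--sector picture of Theorem~\ref{thm:fourier}, and then pass to fractional orders by interpolation.

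\textbf{Boundedness.} For integer $s=m$, boundedness of $D:H_{c,b}^{m+1}\to H_c^m$ is immediate from \eqref{eq:sobolevnorm}. In the $(j,k)$ sector, $D$ acts as $q\sx-i\sy\partial_w$ with $q=q_{c,j,k}$. This costs one power of $(1+q^2)^{1/2}$ or one $w$-derivative, and both are accounted for in the weights.

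\textbf{Regularity estimate (main obstacle).} The heart of the proof is the estimate
\[
\norm{\Psi}_{H_c^{m+1}}\le C\bigl(\norm{D\Psi}_{H_c^m}+\norm{\Psi}_{L^2}\bigr),
\qquad \Psi\in H_{c,b}^{1},\ D\Psi\in H_c^m,
\]
where the hypothesis on the left-hand side is concluded, not assumed. I would prove it by induction on $m$, with $m=0$ given by \eqref{eq:graph}.

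For the inductive step, work sectorwise. Write $f=D_q\phi$ and use the equation itself:
\[
\phi'=i\sy(f-q\sx\phi)=i\sy f+q\sz\phi .
\]
This expresses $\partial_w\phi$ through $f$ and $q\phi$. Differentiating $r$ times, the term $\partial_w^{r+1}\phi$ is controlled by $\partial_w^r f$ and $q\,\partial_w^r\phi$. Summing with the weights $(1+q^2)^{m-r}$ then gives the required control, provided the longitudinal derivatives are handled first.

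Longitudinal derivatives are the easy part: $\partial_x$ corresponds to multiplication by $iq$, which commutes with $D_q$ and preserves the boundary condition. Hence \eqref{eq:energy} applied to $q^a\phi$ bounds $(1+q^2)^{a}\bigl(q^2\norm{\phi}^2+\norm{\phi'}^2\bigr)$ by the weighted norm of $f$.

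The only point needing care is that no boundary condition is required of higher $w$-derivatives. This is why I use the ODE identity directly rather than a second application of \eqref{eq:energy}. The identity holds in $L^2$ without any trace condition, so it propagates regularity by a pure bootstrap. Uniformity in $q$ comes from the weights.

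\textbf{Fredholmness and closedness.} The estimate, together with the compact inclusion $H_c^{m+1}\hookrightarrow L^2$ from Theorem~\ref{thm:sobolev}, gives finite-dimensional kernel and closed range. For the cokernel, take $g\in H_c^m$. By Theorem~\ref{thm:compact}, $g\perp\ker D_{c,b}$ if and only if $g=D\Psi$ for some $\Psi\in\Dom(D_{c,b})$, and regularity then places $\Psi$ in $H_{c,b}^{m+1}$. So the range is $H_c^m\cap(\ker)^{\perp_{L^2}}$, which has codimension $\dim\ker$ in $H_c^m$. This yields index zero.

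Closedness as an unbounded operator on $H_c^m$ follows from the same estimate, since graph convergence forces convergence in $H^{m+1}$, and the trace condition is closed.

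\textbf{Kernel and fractional orders.} Kernel elements lie in every $H_c^m$ by the bootstrap. By Sobolev embedding on the compact manifold, or alternatively because each kernel sector is an explicit exponential solution, they are smooth, and the kernel is independent of $s$.

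For real $s=m+\alpha$, I would interpolate. The sectorwise inverse on $(\ker)^\perp$ acts boundedly $H^m\to H^{m+1}_{b}$ and $H^{m+1}\to H^{m+2}_{b}$, because the finite-rank kernel projection is smoothing. It remains to check that complex interpolation of $H_{c,b}^{m+1}$ and $H_{c,b}^{m+2}$ gives $H_{c,b}^{s+1}$. This holds because the boundary condition is a projection onto $E_b(\sx)$ with constant coefficients. A retraction--coretraction (extend, then project the trace by $(I+b\sx)/2$) commutes with the interpolation functor.

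This interpolation identification is the second place I expect friction. If it resists, I would replace it by a direct sectorwise weighted estimate of the form \eqref{eq:sobolevnorm} at fractional orders.
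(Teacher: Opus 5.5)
Your proposal is correct. Its regularity argument is the paper's: the energy identity \eqref{eq:energy} applied to $q^a\phi$, followed by bootstrapping normal derivatives through the first-order equation. Fix one sign: $i\sy\sx=\sz$ gives $\phi'=i\sy f-q\sz\phi$, as in \eqref{eq:normalode}. After that step you take a different route in two places.

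\textbf{Index zero.} You identify the range directly, via Theorem~\ref{thm:compact} and regularity, as $H_c^m\cap(\ker D_{c,b})^{\perp}$. This has codimension $\dim\ker D_{c,b}$ because the kernel is smooth. The paper instead proves that $D+i:H_{c,b}^{s+1}\to H_c^s$ is an isomorphism and treats $D$ as a compact perturbation of it. Your version describes the range and cokernel explicitly; the paper's gives index zero at every $s$ at once.

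\textbf{Fractional orders.} This is the more substantive difference. You interpolate between the constrained spaces $H_{c,b}^{m+1}$ and $H_{c,b}^{m+2}$, so you must prove $[H_{c,b}^{m+1},H_{c,b}^{m+2}]_\alpha=H_{c,b}^{s+1}$. The identity is true here: the condition has order zero and both endpoint orders exceed $1/2$. But your retraction needs a trace right inverse $E$, bounded at both levels and compatible with the deck action, so that $u\mapsto u-E\bigl(\tfrac12(I-b\sx)\operatorname{tr}u\bigr)$ is a common projection. You only sketch this.

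The paper sidesteps it. It interpolates the single operator $(D+i)^{-1}:H_c^m\to H_c^{m+1}$ between the \emph{unconstrained} spaces, where interpolation holds by definition. Its output is the $L^2$ resolvent, so it already lies in $\Dom(D_{c,b})$ and satisfies the trace condition. The same device works for your inverse on $(\ker D_{c,b})^{\perp}$, composed with $I-\Pi_0$, where $\Pi_0$ is the $L^2$ projection onto the kernel. Call this map $Q$ and interpolate it as a map $H_c^m\to H_c^{m+1}$; its values automatically satisfy the boundary condition. That removes the step you flagged as friction.

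\textbf{Non-integer $s$.} Your text covers boundedness and closedness only at integer levels, so state them explicitly for non-integer $s$. Boundedness follows by interpolating $D:H_c^{m+1}\to H_c^m$. Closedness follows from $u=QDu+\Pi_0u$, which gives $\norm{u}_{H_c^{s+1}}\le C(\norm{Du}_{H_c^s}+\norm{u}_{L^2})$ on $H_{c,b}^{s+1}$.
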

\begin{proof}
We first prove, for integers $m\geq0$, that $u\in\Dom(D_{c,b})$ and $f=Du\in H_c^m$ imply
\begin{equation}\label{eq:regularity}
 \norm{u}_{H_c^{m+1}}\leq C_m(\norm{f}_{H_c^m}+\norm{u}_{L^2}).
\end{equation}
For a finite longitudinal truncation apply \eqref{eq:energy} to $q^a u_{j,k}$ for $0\leq a\leq m$. This controls all derivatives with at most one $w$ derivative and total order at most $m+1$. Constants with $q=0$ are controlled by the $L^2$ term. For the remaining derivatives use the distributional equation
\begin{equation}\label{eq:normalode}
 \partial_wu_{j,k}=i\sy f_{j,k}-q\sz u_{j,k}.
\end{equation}
Differentiating this identity controls a derivative with $r\geq2$ normal derivatives by derivatives of $f$ of total order at most $m$ and derivatives of $u$ with one fewer normal derivative and the same total order. Induction on $r$ proves \eqref{eq:regularity}, uniformly in the truncation. Passing to the limit proves both the estimate and the asserted gain of regularity.

For $D+i$, the $L^2$ inverse exists by self-adjointness. If $g\in H_c^m$ and $(D+i)u=g$, induction in \eqref{eq:regularity} applied to $Du=g-iu$ gives $u\in H_c^{m+1}$ and a bounded inverse from $H_c^m$ to $H_c^{m+1}$. Interpolation of this single consistent inverse at adjacent integer orders gives the same statement for every real $s\geq0$. The output satisfies the boundary condition because it is the original $L^2$ inverse and its trace is defined. Conversely $D+i$ maps $H_{c,b}^{s+1}$ to $H_c^s$ and is injective. It is therefore an isomorphism between these spaces.

The inclusion $H_{c,b}^{s+1}\hookrightarrow H_c^s$ is compact. The map in \eqref{eq:HsFredholm} is a compact perturbation of the isomorphism $D+i$, so it is Fredholm of index zero. The corresponding estimate with $\norm{u}_{H_c^s}$ gives equivalence of its graph norm and the $H_c^{s+1}$ norm and proves closedness. Iterating \eqref{eq:regularity} with $f=0$ makes kernel elements smooth.
\end{proof}

\section{Spectrum, symmetry and boundary dependence}\label{sec:spectrum}
\subsection{Transverse quantization}
For $b=\pm1$, let $v_b=2^{-1/2}(1,b)^T$. On the interval of length two set
\begin{equation}\label{eq:transversemodes}
 \mu_n=\frac{n\pi}{2},\qquad
 f_n(w)=\frac{1}{\sqrt2}\exp\big(i\sy\mu_n(w+1)\big)v_b,
 \qquad n\in\Z.
\end{equation}

\begin{lemma}\label{lem:transversespectrum}
The functions $f_n$ are an orthonormal eigenbasis of $T_b$, with eigenvalues $\mu_n$. Moreover,
\begin{equation}\label{eq:transverseidentities}
 Jf_n=b(-1)^nf_n,\qquad \sx f_n=b f_{-n}.
\end{equation}
The zero transverse eigenvalue is simple.
\end{lemma}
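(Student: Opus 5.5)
The plan is to compute directly with the matrix exponential $\exp(i\sy\theta)=\cos\theta\, I+i\sin\theta\,\sy$, which is a real rotation matrix because $i\sy=\begin{pmatrix}0&1\\-1&0\end{pmatrix}$.

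\textbf{Eigenfunction property.} Differentiating gives $f_n'=i\sy\mu_nf_n$. Since $\sy^2=I$, this yields $T_bf_n=-i\sy f_n'=\mu_n f_n$.

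\textbf{Boundary condition.} At $w=-1$ we have $f_n(-1)=2^{-1/2}v_b\in E_b(\sx)$. At $w=1$, the rotation angle is $2\mu_n=n\pi$, so $\exp(i\sy n\pi)=(-1)^nI$ and $f_n(1)=(-1)^n2^{-1/2}v_b$, again in $E_b(\sx)$. Since $f_n$ is smooth, $f_n\in\Dom(T_b)$.

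\textbf{Orthonormality.} Because $\exp(i\sy\theta)$ is unitary,
\[
\ip{f_m}{f_n}=\tfrac12\int_{-1}^1 v_b^*\exp\big(i\sy(\mu_n-\mu_m)(w+1)\big)v_b\,dw .
\]
From $v_b^*v_b=1$ and $v_b^*\sy v_b=0$ (as $\sy$ maps $E_b(\sx)$ to $E_{-b}(\sx)$), the integrand reduces to $\cos((\mu_n-\mu_m)(w+1))$. Its integral over $[-1,1]$ is $2\delta_{mn}$, because the frequencies are multiples of $\pi/2$ over a length-two interval; for $m\ne n$ the integral is $\sin((n-m)\pi)/((n-m)\pi/2)=0$.

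\textbf{Identities \eqref{eq:transverseidentities}.} Since $\sx$ anticommutes with $\sy$, we have $\sx\exp(i\sy\theta)=\exp(-i\sy\theta)\sx$. Combined with $\sx v_b=bv_b$, this gives $\sx f_n=bf_{-n}$. For $J$, compute
\[
\mathcal Pf_n(w)=2^{-1/2}\exp\big(i\sy\mu_n(1-w)\big)v_b=2^{-1/2}\exp(i\sy n\pi)\exp\big(-i\sy\mu_n(w+1)\big)v_b=(-1)^nf_{-n}(w).
\]
Hence $Jf_n=(-1)^n\sx f_{-n}=b(-1)^nf_n$.

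\textbf{Completeness.} This is the main obstacle. The planned route is to use the self-adjointness of $T_b$ from Lemma~\ref{lem:transverse} together with compactness of its resolvent, which follows from the estimate $\norm{T_b\phi}=\norm{\phi'}$ and the compact embedding $H^1\hookrightarrow L^2$. This yields an orthonormal eigenbasis, so it suffices to show that every eigenfunction is a multiple of some $f_n$.

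To do so, solve $-i\sy\phi'=\lambda\phi$. This gives $\phi(w)=\exp(i\sy\lambda(w+1))\phi(-1)$ with $\phi(-1)=av_b$, using that $E_b(\sx)$ is one-dimensional. Write $\exp(i\sy\theta)v_b=\cos\theta\,v_b+\sin\theta\,i\sy v_b$, where $i\sy v_b\in E_{-b}(\sx)$. The condition at $w=1$ then forces $\sin(2\lambda)=0$, so $\lambda=n\pi/2$ and $\phi=\sqrt2\,a f_n$.

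This shows that each eigenspace, in particular the one for eigenvalue zero, is one-dimensional. The zero eigenspace is spanned by the constant function $f_0$, which proves simplicity. The eigenvalues $\mu_n$ for $n\in\Z$ are distinct, so the $f_n$ exhaust a complete orthonormal eigenbasis.
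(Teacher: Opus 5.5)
Your proof is correct and follows essentially the same route as the paper. You solve the eigenvalue ODE with the exponential of $i\sy$, use one-dimensionality of $E_b(\sx)$ and $i\sy v_b\in E_{-b}(\sx)$ to force $\sin(2\mu)=0$, get completeness from the compact resolvent (graph norm equals the $H^1$ norm) plus the spectral theorem, and derive the identities from anticommutation and $\exp(2i\sy\mu_n)=(-1)^nI$; the only cosmetic difference is that you check orthonormality by direct integration, where the paper cites orthogonality of eigenvectors for distinct eigenvalues.
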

\begin{proof}
The equation $T_b\phi=\mu\phi$ gives $\phi(w)=\exp(i\sy\mu(w+1))\phi(-1)$. The condition at $-1$ makes $\phi(-1)$ a multiple of $v_b$. The condition at $1$ is $\sin(2\mu)=0$, because $\sy v_b$ lies in the opposite $\sx$ eigenspace. Self-adjointness makes $\mu$ real, giving exactly \eqref{eq:transversemodes}. The normalization is correct since the exponential is unitary and the interval length is two. Distinct eigenvalues are orthogonal. The graph norm of $T_b$ is the interval $H^1$ norm, so its resolvent is compact. Its spectral theorem and the solved eigenvalue equation imply completeness. Anticommutation of $\sx$ and $\sy$, together with $\exp(2i\sy\mu_n)=(-1)^nI$, proves \eqref{eq:transverseidentities}.
\end{proof}

\subsection{The full spectrum}
Put
\begin{equation}\label{eq:thetan}
 e^{2\pi i\theta_n}=cb(-1)^n,\qquad 0\leq\theta_n<1,\qquad
 q_{n,k}=2\pi(k+\theta_n).
\end{equation}

\begin{theorem}[Coupled spectral formula]\label{thm:spectrum}
The spectrum of $D_{c,b}$, including multiplicities, is the multiset union of
\begin{align}
 \lambda_{0,k}&=bq_{0,k},&&k\in\Z,\label{eq:zerochannel}\\
 \lambda_{n,k}^{\pm}&=\pm\sqrt{q_{n,k}^2+(n\pi/2)^2},
 &&n\geq1,\ k\in\Z.\label{eq:pairedchannels}
\end{align}
Each displayed branch contributes one eigenvector for each of its labels; coincident values contribute the sum of these multiplicities. The transverse zero mode occurs only once. In particular,
\begin{equation}\label{eq:kernel}
 \dim\ker D_{c,b}=\begin{cases}1,&cb=1,\\0,&cb\ne1.\end{cases}
\end{equation}
\end{theorem}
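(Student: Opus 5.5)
We reduce the spectral problem of $D_{c,b}$ to a family of $2\times2$ matrix problems. Theorem~\ref{thm:SA} shows that $D_{c,b}$ is the Hilbert direct sum of the operators $D_{q_{c,j,k}}|_{\mathcal K_j}$. Hence the spectrum with multiplicities is the multiset union of the spectra of these summands, and each summand has discrete spectrum by Theorem~\ref{thm:compact}.

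\emph{Transverse basis in each sector.} Fix a sector $j$ and invoke Lemma~\ref{lem:transversespectrum}. The identity $Jf_n=b(-1)^nf_n$ shows that $\{f_n: b(-1)^n=j\}$ is an orthonormal basis of $\mathcal K_j$. This uses completeness of $\{f_n\}$ in $\mathcal K$ together with the fact that $\Pi_j$ maps each $f_n$ to either $f_n$ or $0$. Since $n$ and $-n$ have the same parity, the pair $f_n,f_{-n}$ lies in the same sector. Moreover, the phase of that sector is $cj=cb(-1)^n$, which is exactly $\theta_n$ in \eqref{eq:thetan}. Thus in the sector containing level $n$ we have $q_{c,j,k}=q_{n,k}$.

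\emph{Two-dimensional reduction.} For $n\ge1$, the span of $f_n,f_{-n}$ is invariant under $D_q=q\sx+T_b$. Indeed, $T_b f_{\pm n}=\pm\mu_n f_{\pm n}$ and $\sx f_{\pm n}=bf_{\mp n}$. In this basis the operator is the Hermitian matrix $\begin{pmatrix}\mu_n & bq\\ bq & -\mu_n\end{pmatrix}$, whose eigenvalues are $\pm\sqrt{q^2+\mu_n^2}$ with one eigenvector each. For $n=0$, $f_0$ is an eigenvector of $D_q$ with eigenvalue $bq$, because $T_bf_0=0$ and $\sx f_0=bf_0$. These invariant subspaces are mutually orthogonal, finite-dimensional, and together span $\mathcal K_j$. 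Hence $D_q|_{\mathcal K_j}$ is diagonalized by them, and its spectrum is exactly the listed values. No further care is needed about domains, since each subspace lies in $\Dom(T_b)$ and their closed span is all of $\mathcal K_j$. A self-adjoint operator that is diagonal on an orthonormal basis of eigenvectors has no other spectrum.

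\emph{Assembly and kernel count.} Taking the union over $k\in\Z$ and over levels $n\ge0$ gives the formulas \eqref{eq:zerochannel} and \eqref{eq:pairedchannels}, with each level $n$ appearing once. Here level $n$ means the pair $\pm n$, and $f_0$ appears only once because $\mu_0$ is simple. For the kernel, the paired branches vanish only when $q=0$ and $\mu_n=0$, which is impossible for $n\ge1$. The zero branch vanishes exactly when $k+\theta_0=0$. This forces $\theta_0=0$, equivalently $cb=1$, and then only $k=0$ contributes, giving \eqref{eq:kernel}.

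The main obstacle is the bookkeeping in the first step. One must check carefully that the sector phase attached to the pair $f_{\pm n}$ really is $cb(-1)^n$, and that the level-$n$ pair is counted exactly once rather than once per sector.
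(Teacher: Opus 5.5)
Your proposal is correct and follows the paper's own proof. Both sort the transverse eigenbasis into $J$-sectors via $Jf_n=b(-1)^nf_n$, identify the sector phase $cj$ with $e^{2\pi i\theta_n}$, reduce to the block $\begin{pmatrix}\mu_n&bq\\ bq&-\mu_n\end{pmatrix}$ on $\operatorname{span}\{f_n,f_{-n}\}$ together with the scalar $bq$ on $f_0$, and read off the kernel from the zero branch. Your added checks, that $\{f_n:b(-1)^n=j\}$ is an orthonormal basis of $\mathcal K_j$ and that an orthonormal eigenbasis of the self-adjoint summand fixes its spectrum with multiplicities, spell out the completeness step the paper states in one line.
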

\begin{proof}
By \eqref{eq:transverseidentities}, the span of $f_n,f_{-n}$ for $n\geq1$ lies in $\mathcal K_{b(-1)^n}$. Equivariance therefore imposes exactly the phase \eqref{eq:thetan}. On this two-dimensional span the $k$th longitudinal block is
\begin{equation}\label{eq:block}
 \begin{pmatrix}\mu_n&bq_{n,k}\\bq_{n,k}&-\mu_n\end{pmatrix}.
\end{equation}
Its eigenvalues are \eqref{eq:pairedchannels}. The space spanned by $f_0$ is one dimensional, and there $T_b=0$ and $\sx=b$, giving \eqref{eq:zerochannel}. Completeness follows by first using the longitudinal Fourier decomposition and then the transverse eigenbasis in each $J$ eigenspace. No branch with $n\geq1$ can vanish. In the zero branch, $k+\theta_0=0$ is possible exactly when $\theta_0=0$, equivalently $cb=1$, and then $k=0$ is unique.
\end{proof}

The factors $2\pi$ and $\pi/2$ arise, respectively, from the core length one and transverse interval length two. The phase assignment alternates with $n$; replacing all $\theta_n$ by one half discards part of the spectrum. Also, eliminating the longitudinal zero frequency does not in general determine the kernel for a different physical boundary condition.

\subsection{Domain-preserving and sector-exchanging symmetries}
Let $K$ denote entrywise complex conjugation in the chosen trivialization. Define the antilinear map $C=\sx K$. Its action on equivariance is obtained by conjugating the scalar $c$.

\begin{proposition}[Symmetry with domains]\label{prop:symmetry}
The following maps have the indicated domains and intertwining relations:
\begin{center}
\begin{tabular}{lll}
\toprule
Map&Domain mapping&Relation\\
\midrule
$C=\sx K$&$\Dom D_{c,b}\longrightarrow\Dom D_{\bar c,b}$
 &$D_{\bar c,b}C=-CD_{c,b}$\\
$\Gamma=\sz$&$\Dom D_{c,b}\longrightarrow\Dom D_{-c,-b}$
 &$D_{-c,-b}\Gamma=-\Gamma D_{c,b}$\\
\bottomrule
\end{tabular}
\end{center}
For $c=\pm1$, $C$ preserves the realization and gives spectral symmetry within it. For $c=\pm i$, $C$ exchanges the two Pin lifts with the same $b$. The constant chirality matrix never preserves both the lift and the boundary sign in this family.
\end{proposition}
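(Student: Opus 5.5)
The plan is to verify each row of the table by direct computation in the trivialization of Theorem~\ref{thm:sections}. For each map I will check three things: that it converts the equivariance condition \eqref{eq:equiv} for one lift into that for the indicated lift, that it preserves $H_c^1$ and converts the trace condition \eqref{eq:BC} appropriately, and that it anticommutes with the expression $D$ in \eqref{eq:D}. Both $C$ and $\Gamma$ are pointwise operations by constant (anti)unitary matrices. Hence they commute with weak derivatives up to conjugation, with traces, and with the norms of Theorem~\ref{thm:sobolev}. Because $C^2=I$ and $\Gamma^2=I$ and the target data map back to the source data, each domain mapping will be a bijection with inverse of the same form.

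\emph{The map $C=\sx K$.} If $\Psi(x+1,-w)=c\sx\Psi(x,w)$, then
\[
(C\Psi)(x+1,-w)=\sx\bar c\,\sx\overline{\Psi(x,w)}=\bar c\,\sx(C\Psi)(x,w),
\]
so $C\Psi$ is a section of $S_{\bar c}$. On the boundary, conjugating $\sx\Psi=b\Psi$ gives $\sx\bar\Psi=b\bar\Psi$ because $\sx$ and $b$ are real. Multiplying by $\sx$ then gives $\sx(C\Psi)=b\,C\Psi$. For the intertwining, I will use $\bar\sx=\sx$, $\bar\sy=-\sy$ and $\sy\sx=-\sx\sy$. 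Both $D C\Psi$ and $C D\Psi$ reduce to $\pm i\sx(\sx\partial_x-\sy\partial_w)\bar\Psi$ with opposite signs, which yields $D_{\bar c,b}C=-CD_{c,b}$.

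\emph{The map $\Gamma=\sz$.} Since $\sz\sx=-\sx\sz$, the equivariance phase changes from $c$ to $-c$, and $\sx\sz\Psi=-b\sz\Psi$ on the boundary. Anticommutation of $\sz$ with $\sx$ and $\sy$ gives $D\sz=-\sz D$. Everything is on the level of $H^1$ traces, so Definition~\ref{def:realization} is respected exactly.

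For the concluding sentences I will argue as follows. For $c=\pm1$ we have $\bar c=c$, so $C$ is an antiunitary involution of $\Dom D_{c,b}$ that anticommutes with $D_{c,b}$. It therefore maps the $\lambda$-eigenspace onto the $(-\lambda)$-eigenspace, and by Theorem~\ref{thm:compact} the spectrum is symmetric with multiplicities. For $c=\pm i$, conjugation gives $\bar c=-c$, which by \eqref{eq:c} is the other $\varepsilon$ within $\Pin^-$ with the same $b$. Finally, $\Gamma$ always sends $(c,b)$ to $(-c,-b)$ and so changes both data. I expect no real obstacle here; the only point requiring care is the sign bookkeeping in the conjugated $\sy$-term and checking that complex conjugation preserves weak derivatives and traces, which holds because it is an isometric real-linear operation.
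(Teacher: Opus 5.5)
Your proposal is correct and follows the paper's proof: you check directly that the constant (anti)unitary maps $C$ and $\Gamma$ convert the equivariance phase and the boundary eigenvalue as stated, preserve $H^1$ and traces, and anticommute with $D$, and you obtain bijectivity from the involutive inverses. The only differences are cosmetic. You compute $DC\Psi$ and $CD\Psi$ explicitly, where the paper uses $C\sx=\sx C$, $C\sy=\sy C$ and $Ci=-iC$. You also spell out the eigenspace pairing $\lambda\mapsto-\lambda$ for $c=\pm1$, which uses that the eigenvalues are real.
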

\begin{proof}
The identities $C\sx=\sx C$ and $C\sy=\sy C$, together with $Ci=-iC$, give $CDC^{-1}=-D$. Also
\[
 (C\Psi)(\tau p)=\bar c\sx(C\Psi)(p),\qquad
 \sx(C\Psi)|_{\partial M}=b(C\Psi)|_{\partial M}.
\]
Thus the first domain mapping holds. For $\Gamma$, anticommutation with both $\sx$ and $\sy$ gives $\Gamma D=-D\Gamma$, equivariance with $-c$, and boundary eigenvalue $-b$. All maps preserve $H^1$. Their inverses have the same properties, giving bijective mappings of the stated domains and eigenspaces.
\end{proof}

\subsection{Eta values and the ungraded index}
For $\operatorname{Re}s>2$, define the eta function by the absolutely convergent sum over nonzero eigenvalues. The paired branches cancel. If $0<\theta_0<1$, the remaining branch gives
\begin{equation}\label{eq:eta}
 \eta_{c,b}(s)=b(2\pi)^{-s}
 \big[\zeta(s,\theta_0)-\zeta(s,1-\theta_0)\big].
\end{equation}
If $\theta_0=0$, the nonzero eigenvalues of this branch pair and its eta function is identically zero. The identity $\zeta(0,a)=1/2-a$ and meromorphic continuation of the Hurwitz zeta function \cite[Section 25.11]{DLMF} therefore give
\begin{equation}\label{eq:eta0}
 \eta_{c,b}(0)=\begin{cases}
 0,&\theta_0=0,\\
 b(1-2\theta_0),&0<\theta_0<1.
 \end{cases}
\end{equation}
This derivation also proves regularity at zero for these realizations.

\begin{corollary}\label{cor:pineta}
For either boundary sign $b$, the four pure Pin lifts satisfy
\begin{center}
\begin{tabular}{cccc}
\toprule
Type&$c$&$\dim\ker D_{c,b}$&$\eta_{c,b}(0)$\\
\midrule
$\Pin^+$&$+1$&$1$ if $b=+1$, otherwise $0$&$0$\\
$\Pin^+$&$-1$&$1$ if $b=-1$, otherwise $0$&$0$\\
$\Pin^-$&$+i$&$0$&$+1/2$\\
$\Pin^-$&$-i$&$0$&$-1/2$\\
\bottomrule
\end{tabular}
\end{center}
Each individual negative Pin realization has an asymmetric spectrum. Their direct sum at fixed $b$ has symmetric spectrum and zero eta function. All four have ordinary complex Fredholm index zero.
\end{corollary}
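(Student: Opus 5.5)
The plan is to specialize Theorem~\ref{thm:spectrum} and formula \eqref{eq:eta0} to the four values of $c$ from \eqref{eq:c}, using only the zero-branch shift $\theta_0$. By \eqref{eq:thetan} with $n=0$, $\theta_0$ is defined by $e^{2\pi i\theta_0}=cb$. For $\Pin^+$ we have $cb=\pm1$, so $\theta_0\in\{0,1/2\}$. For $\Pin^-$ we have $cb=\pm i$, so $\theta_0\in\{1/4,3/4\}$.

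The kernel column then follows at once from \eqref{eq:kernel}. In the positive case, $cb=1$ exactly when $b=c$. In the negative case, $cb$ is never $1$, so the kernel is trivial.

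The eta column comes from \eqref{eq:eta0}, taken case by case.
\begin{itemize}
\item For $\Pin^+$, $\theta_0=0$ gives $\eta=0$ directly. When $\theta_0=1/2$, the value is $b(1-2\cdot\tfrac12)=0$.
\item For $\Pin^-$ with $c=i$: if $b=1$, then $cb=i$, so $\theta_0=1/4$ and $\eta=1\cdot(1-\tfrac12)=\tfrac12$. If $b=-1$, then $cb=-i$, so $\theta_0=3/4$ and $\eta=(-1)(1-\tfrac32)=\tfrac12$.
\item For $c=-i$, the same computation gives $-\tfrac12$ for either $b$.
\end{itemize}
This $b$-independence is the one point to check carefully, since both $b$ and $\theta_0$ flip sign together in the product $b(1-2\theta_0)$.

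Asymmetry of each negative realization follows from the zero branch. By Theorem~\ref{thm:spectrum} its eigenvalues $b\cdot 2\pi(k+\theta_0)$ with $\theta_0=1/4$ form $\pm(2\pi)(\tfrac14+\Z)$, which is not invariant under negation. The paired branches \eqref{eq:pairedchannels} are symmetric, so multiplicities at $\lambda$ and $-\lambda$ differ. Concretely, $\pi b/2$ occurs in the zero branch but $-\pi b/2$ does not. One must also check that the paired branches cannot restore the balance. Their values are symmetric as a multiset, so the imbalance persists.

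For the direct sum at fixed $b$, Proposition~\ref{prop:symmetry} supplies the antiunitary $C$ mapping $\Dom D_{i,b}$ onto $\Dom D_{-i,b}$ with $D_{-i,b}C=-CD_{i,b}$. Hence $\spec D_{-i,b}=-\spec D_{i,b}$ with multiplicities. It follows that the direct sum has symmetric spectrum and that its eta function, the sum of the two eta functions, vanishes identically.

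Finally, index zero for all four is Theorem~\ref{thm:compact}. There is no real obstacle here; the only care needed is the sign bookkeeping of $\theta_0$ against $b$.
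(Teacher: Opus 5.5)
Your proposal is correct and follows essentially the same route as the paper's proof. You insert the zero-branch shift $\theta_0$ into the kernel criterion and the eta formula, and you get asymmetry from the unpaired zero branch against the symmetric paired branches (the paper also notes that the nonzero eta values already exclude symmetry). You then use the antiunitary map $C$ of Proposition~\ref{prop:symmetry} to pair the two negative lifts at fixed $b$, and cite Theorem~\ref{thm:compact} for index zero.
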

\begin{proof}
Insert the phases from \eqref{eq:thetan} into \eqref{eq:kernel} and \eqref{eq:eta0}. The nonzero eta values exclude spectral symmetry for an individual negative lift. Alternatively, its zero branch has different positive and negative offsets, whereas every nonzero transverse branch is paired. Proposition~\ref{prop:symmetry} pairs the two negative lifts. The index statement is Theorem~\ref{thm:compact}.
\end{proof}

The sign labels in this table depend on the chosen generator, representation and sign of $D$. The eta values are properties of these boundary realizations, not a boundary-independent invariant of an unspecified Pin surface. We have not defined a chiral index or a real mod-two index. In particular, a symmetric spectrum would not imply a trivial kernel.

\subsection{The translation-invariant local boundary family}\label{sec:family}
The domain dependence can be made explicit beyond \eqref{eq:BC}. Let $t$ be an angle modulo $2\pi$, and put
\begin{equation}\label{eq:Bt}
 B_+(t)=\cos t\,\sx+\sin t\,\sz,\qquad
 B_-(t)=\sx B_+(t)\sx=\cos t\,\sx-\sin t\,\sz.
\end{equation}
Impose $B_+(t)\Psi(x,1)=\Psi(x,1)$ and $B_-(t)\Psi(x,-1)=\Psi(x,-1)$. Write $D_{c,t}$ for the expression \eqref{eq:D} on the resulting $H_c^1$ domain. The special cases $t=0,\pi$ are $D_{c,+1}$ and $D_{c,-1}$.

\begin{proposition}[Boundary classification in the invariant family]\label{prop:classification}
Every translation-invariant rank-one local boundary condition which is self-annihilating for Green's form and compatible with the deck action is of the form \eqref{eq:Bt}. It is elliptic exactly when $\cos t\ne0$. In that case $D_{c,t}$ is self-adjoint with compact resolvent and ordinary index zero.
\end{proposition}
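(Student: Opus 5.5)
The proof will proceed in three steps: first reduce admissible boundary data to pairs of fixed lines in $\C^2$ and classify them; then read off ellipticity from the boundary-symbol computation already made in Proposition~\ref{prop:boundary}; finally prove self-adjointness and compact resolvent, adapting Lemma~\ref{lem:transverse} and Theorems~\ref{thm:SA} and \ref{thm:compact}.

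\emph{Classification.} Translation invariance on the cover means a local rank-one condition is given by constant lines $L_+\subset\C^2$ at $w=1$ and $L_-\subset\C^2$ at $w=-1$. By the Green identity in the proof of Proposition~\ref{prop:boundary}, the boundary form at $w=1$ is $u^*\sy v$ and at $w=-1$ it is $-u^*\sy v$. Self-annihilation therefore means $L_\pm^{\perp_{\sy}}=L_\pm$, and since the lines are rank one this is equivalent to isotropy: $v^*\sy v=0$ for $v\in L_\pm$. For $v=(a,d)^T$ one has $v^*\sy v=2\operatorname{Im}(\bar a d)$. Hence, up to a phase, $v$ is a real vector $(\cos(t/2),\sin(t/2))^T$. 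This is exactly the $+1$ eigenline of the real symmetric reflection $B_+(t)=\cos t\,\sx+\sin t\,\sz$, and $t$ modulo $2\pi$ parametrizes these lines bijectively.

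Compatibility with the deck action means that $R_c$ carries the condition on the edge $w=1$ to the condition on the edge $w=-1$ (the edge $\tau$ maps it to). Since $R_c=c\sx$ and scalars preserve lines, this forces $L_-=\sx L_+$, i.e.\ $B_-(t)=\sx B_+(t)\sx$. Conversely, $\tau^2$ fixes each edge and $R_c^2=\delta I$ is scalar, so no further constraint arises. This gives \eqref{eq:Bt}.

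\emph{Ellipticity.} By Proposition~\ref{prop:boundary}, for frozen tangential frequency $\xi\ne0$ the decaying solutions have boundary values in $E_{+1}(\sz)$ or $E_{-1}(\sz)$, and both occur as $\xi$ changes sign and as the edge changes. The Shapiro--Lopatinski condition is that the boundary line is complementary to this one-dimensional space, i.e.\ $L_\pm\ne E_{\pm1}(\sz)$. Now $L_+$ equals a $\sz$ eigenline exactly when $B_+(t)=\pm\sz$, that is when $\cos t=0$; the same holds for $L_-=\sx L_+$. So the condition is elliptic exactly when $\cos t\ne0$, and it fails at $t=\pm\pi/2$.

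\emph{Self-adjointness and compact resolvent.} I will repeat the structure of Theorems~\ref{thm:SA} and \ref{thm:compact}. The Fourier decomposition of Theorem~\ref{thm:fourier} still applies, because $J$ preserves the new domain: reflection exchanges the edges, and $\sx$ conjugates $B_+(t)$ to $B_-(t)$. On each sector the operator is $D_q=q\sx+T_t$, where $T_t$ carries the endpoint lines $L_\pm$. The argument of Lemma~\ref{lem:transverse} then shows that $T_t$ and $D_q$ are self-adjoint: $H^1$ regularity of adjoint elements, the endpoint values ranging over $L_\pm$, and Lagrangian lines. The direct-sum resolvent argument of Theorem~\ref{thm:SA} then gives self-adjointness of $D_{c,t}$ on the $L^2$ level.

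The main obstacle is the graph estimate, since the energy identity \eqref{eq:energy} now acquires a boundary term. On $L_+$ one has $\phi^*\sz\phi=\sin t\,|\phi|^2$, and on $L_-$ one has $\phi^*\sz\phi=-\sin t\,|\phi|^2$. So
\[
 \norm{D_q\phi}^2=q^2\norm{\phi}^2+\norm{\phi'}^2+q\sin t\big(|\phi(1)|^2+|\phi(-1)|^2\big),
\]
which is indefinite for one sign of $q$. I will first try to absorb the bad term uniformly in $q$. Writing $\phi(\pm1)=\alpha_\pm v_t$ with $v_t\in L_\pm$ a unit vector, I will use the half-line model: for $|q|$ large the trace is controlled by $\norm{D_q\phi}^2$ with constant $(1-|\sin t|)^{-1}$-type, reflecting the angle between $L_\pm$ and the decaying $\sz$ line. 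This should yield $\norm{D_q\phi}^2+\norm{\phi}^2\ge\kappa_t(q^2\norm{\phi}^2+\norm{\phi'}^2)$ with $\kappa_t>0$ whenever $\cos t\ne0$. If a clean direct inequality proves awkward, I will instead invoke the general elliptic boundary theory of B\"ar and Ballmann \cite{BB,BBGuide}, where the Shapiro--Lopatinski property established above gives $H^1$ regularity of the maximal domain.

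Either route puts the domain inside $H_c^1$ with an equivalent graph norm. Compactness of $H_c^1\hookrightarrow L^2$ (Theorem~\ref{thm:sobolev}) then gives compact resolvent, and self-adjointness gives index zero exactly as in Theorem~\ref{thm:compact}.
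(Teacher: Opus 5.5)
Your classification and ellipticity steps are the paper's. The paper writes the projection as $(I+\mathbf n\cdot\sigma)/2$ and reads isotropy for $\sy$ as $n_2=0$, which is your condition $\operatorname{Im}(\bar a d)=0$ in other coordinates. It then conjugates by $R_c$ and compares with the decaying $\sz$ lines, as you do.

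Your fallback for the analytic part is also what the paper does. For $\cos t\neq0$ it checks the hypotheses of \cite[Theorems 4.15, 4.6, 4.9 and 4.11]{BBGuide} and takes from there the self-adjointness on the $H^1$ boundary domain and the graph estimate.

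Your primary route is genuinely different, and it does close. Insert the weighted trace identity $|\phi(1)|^2+|\phi(-1)|^2=\int_{-1}^1(w|\phi|^2)'\,dw\le\|\phi\|^2+2\|\phi\|\,\|\phi'\|$ into your boundary-corrected energy identity and use AM--GM twice. This gives, uniformly in $q$,
\[
 \|D_q\phi\|^2+\frac{\sin^2t}{2(1-|\sin t|)}\|\phi\|^2
 \ \geq\ \frac{1-|\sin t|}{2}\big(q^2\|\phi\|^2+\|\phi'\|^2\big).
\]
Summed over sectors, this puts the domain of the self-adjoint direct sum inside $H_c^1$ with the trace condition.

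What each route buys:
\begin{itemize}
\item Your argument is self-contained and parallels Theorem~\ref{thm:SA}. Its constant degenerates exactly as $|\sin t|\to1$, so the role of $\cos t\neq0$ is visible in the estimate itself, not only in the boundary symbol. It also shows that the sectorwise direct sum is self-adjoint for every $t$. Ellipticity is needed only to identify its domain with the $H^1$ boundary domain.
\item The paper's citation is shorter and uses neither translation invariance nor the Fourier reduction. It would therefore survive $x$-dependent boundary lines, and it supplies higher regularity as well.
\end{itemize}

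Two small corrections.
\begin{itemize}
\item The vector $(\cos(t/2),\sin(t/2))^T$ does not span the $+1$ eigenline of $B_+(t)$. At $t=0$ it gives $(1,0)^T$, whereas $E_{+1}(\sx)$ is spanned by $(1,1)^T$. The correct generator is $(\cos(\pi/4-t/2),\sin(\pi/4-t/2))^T$. Every real line still arises for a unique $t$ modulo $2\pi$, so the classification is unaffected. Your later formulas $\phi^*\sz\phi=\pm\sin t\,|\phi|^2$ on $L_\pm$ and the criterion $\cos t=0$ are correct for the true line.
\item In the fallback, B\"ar--Ballmann give $H^1$ regularity for elements of the domain cut out by the boundary condition, not for the maximal domain, which is not contained in $H^1$.
\end{itemize}
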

\begin{proof}
An orthogonal rank-one projection in $\C^2$ has the form $(I+\mathbf n\cdot\boldsymbol\sigma)/2$, $|\mathbf n|=1$. Isotropy for $\sy$ is equivalent to $n_2=0$. Thus the line at the upper boundary is the positive eigenspace of $B_+(t)$ for some $t$. Deck compatibility conjugates this projection by $R_c$, giving $B_-(t)$. Conversely these lines are self-annihilating and compatible.

The decaying boundary lines of the principal ODE are the $\sz$ eigenspaces, as in Proposition~\ref{prop:boundary}. An allowed line equals one of them exactly when $\cos t=0$. Thus the complementing condition holds exactly as stated. For $\cos t\ne0$, the boundary lines form a smooth rank-one subbundle over the compact boundary, are self-annihilating for the normal symbol, and satisfy the local complementing condition at every nonzero tangential covector. The formally self-adjoint Dirac expression acts on a smooth Hermitian Clifford bundle over a compact smooth manifold. These are the hypotheses of the local ellipticity, adjoint-domain and regularity results in \cite[Theorems 4.15, 4.6, 4.9 and 4.11]{BBGuide}, in its specified version. They identify the self-adjoint domain with this $H^1$ boundary domain and give the graph estimate into $H^1$. Compactness of $H^1\hookrightarrow L^2$ then proves compact resolvent; closed range and index zero follow as in Theorem~\ref{thm:compact}. The general local classification is classical; compare \cite{GUV}.
\end{proof}

\begin{theorem}[Secular equation and kernel criterion]\label{thm:secular}
Suppose $\cos t\ne0$ and set
\[
 r_t=\frac{1-\sin t}{\cos t}\ne0.
\]
In the sector $j=\pm1$ and longitudinal frequency $q=2\pi(k+\theta_{c,j})$, the eigenvalues are exactly the real zeros of
\begin{equation}\label{eq:secular}
 F_{q,j,t}(\lambda)=(j-r_t)\cos\mu
 +\big[q(r_t+j)-\lambda(1+r_tj)\big]\frac{\sin\mu}{\mu},
 \qquad \mu^2=\lambda^2-q^2.
\end{equation}
The functions of $\mu$ in this formula are their entire extensions in $\mu^2$, including at $\mu=0$. Each sector-frequency eigenspace has dimension one. The full kernel has dimension at most one and is nonzero exactly when, for $j=\operatorname{sgn}(\cos t)$, there is $k\in\Z$ such that
\begin{equation}\label{eq:kercriterion}
 2\pi(k+\theta_{c,j})=\frac12\log|r_t|.
\end{equation}
\end{theorem}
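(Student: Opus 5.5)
The plan is to reduce to one ODE per Fourier sector, cut it down to the half interval using the reflection symmetry, and read off $F_{q,j,t}$ as a $2\times2$ determinant.

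\emph{Sector reduction.} By Proposition~\ref{prop:classification}, $D_{c,t}$ is self-adjoint with compact resolvent, so its spectrum consists of eigenvalues. The first task is to check that the decomposition of Theorem~\ref{thm:fourier} reduces $D_{c,t}$. Translation in $x$ preserves the boundary lines. The key point is that $J$ preserves the boundary condition: if $B_+(t)\phi(1)=\phi(1)$, then $(J\phi)(-1)=\sx\phi(1)$, and
\[
B_-(t)\sx\phi(1)=\sx B_+(t)\phi(1)=\sx\phi(1),
\]
and symmetrically at the other endpoint. Hence every eigenvector decomposes into $(j,k)$ components, each lying in the domain. In sector $(j,k)$ the eigenproblem becomes
\[
-i\sy\phi'+q\sx\phi=\lambda\phi,\qquad \phi\in\mathcal K_j,
\]
on $[-1,1]$, with the two endpoint lines prescribed.

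\emph{Half-interval problem.} For $\phi\in\mathcal K_j$ we have $\sx\phi(-w)=j\phi(w)$. At $w=0$ this gives $\phi(0)\in E_j(\sx)=\C(1,j)^T$. Conversely, solve the ODE on $[0,1]$ with $\phi(0)\in E_j(\sx)$ and extend by $\phi(-w)=j\sx\phi(w)$. Since $J$ commutes with the expression, the extension is again a solution across $0$, and the condition at $-1$ follows from the condition at $+1$ by the identity above. So the sector problem is equivalent to an initial line at $0$ together with the single condition $B_+(t)\phi(1)=\phi(1)$. A solution is determined by $\phi(0)$, which ranges over a one-dimensional line, so each sector-frequency eigenspace has dimension at most one.

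\emph{The determinant.} The ODE is $\phi'=M\phi$ with
\[
M=i\sy(\lambda-q\sx)=i\lambda\sy-q\sz .
\]
Because $\sy$ and $\sz$ anticommute, $M^2=(q^2-\lambda^2)I=-\mu^2I$. Therefore $e^{M}=\cos\mu\,I+M\,\frac{\sin\mu}{\mu}$, and both coefficients are entire in $\mu^2$, which handles $\mu=0$ automatically. The positive eigenline of $B_+(t)$ is spanned by $(\cos t,\,1-\sin t)^T\propto(1,r_t)^T$. The eigenvalue condition is that $e^{M}(1,j)^T$ is proportional to $(1,r_t)^T$, i.e.\ the determinant
\[
r_t\,a-b=0,\qquad (a,b)^T:=e^{M}(1,j)^T,
\]
vanishes. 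Expanding $a$ and $b$ should reproduce $F_{q,j,t}(\lambda)$ up to an overall sign; I would check the signs of the coefficients $q(r_t+j)$ and $-\lambda(1+r_tj)$ directly. Real zeros are exactly the eigenvalues, since self-adjointness excludes non-real ones, and each zero gives a one-dimensional eigenspace by the previous step.

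\emph{Kernel.} Set $\lambda=0$, so $\mu=iq$, $\cos\mu=\cosh q$ and $\sin\mu/\mu=\sinh q/q$. Then
\[
F=(j-r_t)\cosh q+(r_t+j)\sinh q=j e^{q}-r_te^{-q},
\]
which vanishes iff $e^{2q}=jr_t$. Since $1-\sin t>0$ when $\cos t\ne0$, $\operatorname{sgn}r_t=\operatorname{sgn}\cos t$. So a solution requires $j=\operatorname{sgn}(\cos t)$ and $q=\tfrac12\log|r_t|$, which is \eqref{eq:kercriterion}. For this fixed $j$ the frequencies $2\pi(k+\theta_{c,j})$ are distinct in $k$, so at most one $k$ works. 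Combined with one-dimensionality of each sector eigenspace, this gives $\dim\ker\le1$.

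\emph{Main obstacle.} The delicate step is the reduction: showing that $J$ preserves the $t$-dependent domain, and that the half-line extension gives an $H^1$ solution on the whole interval. That is where the compatibility $B_-(t)=\sx B_+(t)\sx$ is essential. The determinant expansion itself is routine bookkeeping of signs.
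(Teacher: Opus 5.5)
Your proposal is correct and follows essentially the paper's argument: reduce by $J$ to the half interval with initial line $E_j(\sx)$, apply the fundamental matrix $\cos\mu\,I+\frac{\sin\mu}{\mu}(i\lambda\sy-q\sz)$ and test against the upper boundary line, then evaluate at $\lambda=0$ to get $je^{2q}=r_t$. The sign check you left open works out, since your $r_ta-b$ is exactly $-F_{q,j,t}(\lambda)$. Your explicit check that $J$ and translation preserve the $t$-dependent domain, via $B_-(t)=\sx B_+(t)\sx$, fills in a step the paper leaves implicit.
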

\begin{proof}
The $J=j$ condition reduces the transverse problem to $[0,1]$, with $\sx\phi(0)=j\phi(0)$. Any nonzero solution starts at a multiple of $(1,j)^T$; zero initial data give the zero solution. The equation $D_q\phi=\lambda\phi$ is
\[
 \phi'=(i\lambda\sy-q\sz)\phi,
 \qquad (i\lambda\sy-q\sz)^2=(q^2-\lambda^2)I.
\]
Hence its fundamental matrix at $w=1$ is
\[
 \cos\mu\,I+\frac{\sin\mu}{\mu}(i\lambda\sy-q\sz).
\]
The upper boundary line has component ratio $\phi_2(1)/\phi_1(1)=r_t$, so applying the row $(-r_t,1)$ gives exactly \eqref{eq:secular}. Reflection reconstructs the other half-interval, and the initial $\sx$ condition and the ODE ensure matching at zero. Thus the scalar equation is both necessary and sufficient, including for imaginary $\mu$. It has a one-dimensional solution space at a zero, by uniqueness for the initial-value problem. Self-adjointness already proved in Proposition~\ref{prop:classification} ensures that the spectrum is real and has no algebraic multiplicity beyond its eigenspace multiplicity within a block.

At $\lambda=0$, the solution is proportional to $(e^{-qw},j e^{qw})^T$. The endpoint condition is $j e^{2q}=r_t$. Thus $j=\operatorname{sgn}(r_t)=\operatorname{sgn}(\cos t)$ and $q=\tfrac12\log|r_t|$. This gives \eqref{eq:kercriterion}. For fixed $t$ there is at most one such $j$ and one such $k$.
\end{proof}

This criterion illustrates the separate roles of lift and boundary: the lift fixes the discrete set of $q$ values, while the boundary line selects the real value to be matched. Formula \eqref{eq:pairedchannels} is the explicit special case $t=0,\pi$, not a universal formula for this family.

\section{Relation to abstract shifted Hilbert scales}\label{sec:scale}
For a prescribed $\theta$, the scalar scale generated by the modes $e^{2\pi i(k+\theta)x}$ is obtained by completing their finite span with coefficient norm
\[
 \sum_{k\in\Z}(1+|k+\theta|^2)^s|a_k|^2.
\]
For negative $s$ this completion need not consist of $L^2$ functions. For $s=0$ every such shifted system is an orthonormal basis of $L^2([0,1])$; the distinction between shifts lies in the global quasiperiodic interpretation and the domains of longitudinal operators. At order one the scalar domain is $u(1)=e^{2\pi i\theta}u(0)$.

The present construction derives $\theta$ before this abstract step. Each $J$ sector is a Hilbert-valued shifted system. Fixing a transverse profile gives a scalar longitudinal subspace, but it need not be invariant under $D$. The invariant zero-transverse subspace of \eqref{eq:zerochannel} is a particularly direct connection: there
\[
 D_{c,b}=b\left(-i\frac{d}{dx}\right),\qquad
 \spec\left(\frac{D_{c,b}}{2\pi b}\right)=\{k+\theta_0:k\in\Z\}.
\]
For $c=-1$ and $b=+1$, the phase is $\theta_0=1/2$, giving precisely the diagonal model studied abstractly in \cite{CK}. The full two-dimensional operator also contains all nonzero transverse branches, which cannot be replaced by this scalar model.

One may also construct an all-real scale from the actual boundary realization. If $\{u_\ell\}$ is its eigenbasis and $\lambda_\ell$ its eigenvalues, complete finite spectral sums in
\[
 \norm{u}_{\mathscr H_{c,b}^s}^2
 =\sum_\ell(1+\lambda_\ell^2)^s|\ip{u_\ell}{u}|^2.
\]
Then $D:\mathscr H_{c,b}^{s+1}\to\mathscr H_{c,b}^s$ is Fredholm for every real $s$: the zero-eigenspace is finite dimensional, and division by $\lambda_\ell$ is uniformly bounded between adjacent levels on its complement. At level one this is the graph domain, equivalent to $H_{c,b}^1$. Higher levels encode iterated boundary compatibility, and should not all be identified with an unconstrained geometric Sobolev space or with a space having only the single trace condition. This operator scale is separate from the nonnegative geometric mapping theorem, Theorem~\ref{thm:HsFredholm}.

\section{Conclusions and further questions}
For the flat M\"obius band, a Pin lift determines a unitary fiber action, but the longitudinal Fourier phase is the eigenvalue of its product with transverse reflection. Keeping that product explicit yields both sectors of each Pin type. A physical boundary condition is an additional datum. For the longitudinal Clifford boundary line, the resulting realization admits a direct graph estimate, compact resolvent, and a complete spectrum with a transverse-parity constraint. Its unpaired branch shows why one must verify domain preservation before invoking chirality or spectral symmetry.

Several questions remain beyond the model computed here. An $x$-dependent metric or boundary line generally couples the longitudinal modes, although the equivariant-section description persists. A useful extension would quantify how this coupling changes low eigenvalues and the gap under controlled geometric perturbations. For more general orientation-reversing quotients, one must replace the two reflection eigenspaces by the appropriate representation-theoretic decomposition. Overcomplete systems would require a separate analysis of coefficient redundancy; it is not supplied by an orthonormal Fourier decomposition. Finally, comparison of different elliptic boundary realizations requires their actual domains and cannot be inferred from the Pin lift alone.

\end{document}